\newtheorem{theo}{Theorem}[section]
\newtheorem{lemm}{Lemma}[section]
\newtheorem{coro}{Corollary}[section]
\theoremstyle{definition}
\newtheorem{defi}{Definition}[section]
\theoremstyle{remark}
\newtheorem{rem}{Remark}[section]
\numberwithin{equation}{section}
\def\u{ u}
\newcommand{\M}{\mathbb M}
\newcommand{\D}{\mathbb D}
\renewcommand{\S}{{\mathbb S}_\nu}
\newcommand{\Sk}{{\mathbb S}_\kappa}
\newcommand{\R}{{\mathbb R}}
\newcommand{\Dv}{{\rm div}}
\newcommand{\eps}{{\varepsilon}}
\newcommand{\vfi}{{\varphi}}
\newcommand{\vfin}{{\varphi_n}}
\newcommand{\E}{{\mathcal E}_0}
\newcommand{\Er}{{\mathcal E}_r}
\newcommand{\T}{{\mathbb T}_\nu}
\def\f{\frac}
\renewcommand{\O}{\Omega}
\def\hf1{^\f{1}{1-\xi^2}}
\def\be{\begin{equation}}
\def\en{\end{equation}}
\def\bs{\begin{split}}
\def\es{\end{split}}
\renewcommand{\Mc}{M_0(\sro_0,\sqrt{\rho_{0}} u_0, f,\M)}
\newcommand{\Mcr}{M_r(\sro_0,\sqrt{\rho_{0}} u_0, f,\M)}
\newcommand{\Mcn}{M_0(\sqrt{\rho_{0,n}},\sqrt{\rho_{0,n}}u_{0,n} ,f_n,\M_n)}
\newcommand{\Mcnr}{M_{r_{n}}(\sro_{0,n},\sqrt{\rho_{0,n}} ,f_n,\M_n)}
\newcommand{\Cs}{\overline {C}_*}
\newcommand{\Cin}{C^\infty_c(\R^+\times\O)}
\newcommand{\domt}{{(\R^+;L^2(\O))}}
\newcommand{\domf}{{(\R^+;L^4(\O))}}
\newcommand{\domo}{{(\R^+;L^1(\O))}}
\newcommand{\sro}{{\sqrt{\rho}}}
\newcommand{\srof}{\rho^{1/4}}
\newcommand{\F}{{F}}
\newcommand{\De}{\mathcal{D}_E}
\newcommand{\DE}{\mathcal{D}_\mathcal{E}^0}
\newcommand{\DEr}{\mathcal{D}_\mathcal{E}^r}
\newcommand{\A}{\mathbb{A}}
\newcommand{\In}[1]{\textcolor{cyan}{#1}}
\author{Ingrid Lacroix-Violet}
\address{Laboratoire Paul Painlev\'e, Universit\'e de Lille 1}
\email{ingrid.violet@math.univ-lille1.fr}
\author{Alexis F. Vasseur}
\address{Department of Mathematics,
The University of Texas at Austin.}
\email{vasseur@math.utexas.edu}
\title[Global  solutions  to the Quantum Navier-Stokes equation]
{Global Weak Solutions to the Compressible Quantum Navier-Stokes Equation and its semi-classical limit}
\subjclass[2010]{35Q35, 76N10}
\keywords{Global weak solutions, compressible Quantum Navier-Stokes Equations,  vacuum, degenerate viscosity.}
\thanks{\textbf{Acknowledgment.} A. F. Vasseur was partially supported by the NSF Grant DMS 1209420. 
}
\date{\today}
\begin{document}

\begin{abstract}
This paper is dedicated to the construction of global weak solutions to the quantum Navier-Stokes equation, for any initial value with bounded energy and entropy. 
The construction is uniform with respect to the Planck constant. This allows to perform the semi-classical limit to the associated compressible Navier-Stokes equation.
One of the difficulty of the problem is to deal with the degenerate  viscosity, together with the lack of integrability on the velocity. Our method is  based on the construction of weak solutions that are renormalized in the velocity variable. The existence, and stability of these solutions do not need the Mellet-Vasseur inequality.  
\end{abstract}

\maketitle

\section{Introduction}
Quantum models can be used to describe superfluids \cite{LoMo93}, quantum semiconductors \cite{FeZhou93}, weakly interacting Bose gases \cite{Grant73} and quantum trajectories of Bohmian mechanics \cite{Wyatt05}. They have attracted considerable attention in the last decades due, for example, to the development of nanotechnology applications.

In this paper, we consider the barotropic compressible quantum Navier-Stokes equations, which has been derived in \cite{BrMe2010}, under some assumptions, using a Chapman-Enskog expansion in Wigner equation. In particular, we are interested in the existence of global weak solutions together with the associated  semi-classical limit.  The quantum Navier-Stokes equation that we are considering read as:
\begin{equation}
\begin{split}
\label{eq_system}
&\rho_t+\Dv(\rho\u)=0,
\\&(\rho\u)_t+\Dv(\rho\u\otimes\u)+\nabla\rho^{\gamma}-2 \Dv(\sqrt{\nu\rho}\mathbb{S}_\nu+\sqrt{\kappa \rho}\mathbb{S}_\kappa)=\sro f + \sqrt{\kappa} \Dv(\sro \M),
\end{split}
\end{equation}
where 
\begin{equation}\label{eq_formal}
\sqrt{\nu\rho}\mathbb{S}_\nu= \rho\mathbb{D}\u,\qquad \Dv(\sqrt{\kappa \rho}\mathbb{S}_\kappa)=\kappa\rho\nabla\left(\frac{\Delta\sqrt{\rho}}{\sqrt{\rho}}\right),
\end{equation}
and with initial data
\begin{equation}
\label{initial data}
\rho(0,x)=\rho_0(x),\;\;\;(\rho\u)(0,x)=(\rho_0\u_0)(x)\quad\text{ in } \O,
\end{equation}
where $\rho$ is the density, $\gamma>1$, $\u\otimes\u$ is the matrix with components $\u_i\u_j,$ $\mathbb{D}\u=\frac{1}{2}\left(\nabla\u+\nabla\u^T\right)$ is the symetric part of the velocity gradient,
 and $\O=\mathbb{T}^d$ is the $d-$dimensional torus, here $d=2$ or 3. The vector valued function $f$, and the matrix valued function $\M$ are source terms. 

The relation (\ref{eq_formal}) between the stress tensors and the solution $(\sqrt{\rho}, \sro u)$  will be proved in the following form. For the quantic part, it will be showed that 
\begin{equation}\label{eq_quantic}
2\sqrt{\kappa\rho}\mathbb{S}_\kappa=2\kappa \left(\sqrt{\rho}\left(\nabla^2\sqrt{\rho}-4(\nabla\srof\otimes\nabla\srof)\right)\right).
\end{equation}
For the viscous term, the matrix valued function $\mathbb{S}_\nu$ is the symmetric part of a matrix valued function $\T$, where 
\begin{equation}\label{eq_viscous}
\sqrt{\nu\rho}\T=\nu\nabla(\rho u)-2\nu\sqrt{\rho} u\cdot\nabla\sqrt{\rho}.
\end{equation}
Whenever, $\rho$ is regular and away from zero, the quantic part of (\ref{eq_formal})    is equivalent to (\ref{eq_quantic}), and the matrix function $\T$ is formally $\sqrt{\nu\rho}\nabla u$. However, the a priori estimates do not allow to define $1/\sro$ and $\nabla u$. 
\vskip0.3cm

The energy of the system is given by 
$$
E(\sro,\sro u) =\int_{\O} \left(\rho\frac{|u|^2}{2}+\frac{\rho^\gamma}{\gamma-1}+2\kappa|\nabla\sro|^2\right)\,dx,
$$
with dissipation of entropy (in the case without source term)
$$
\De(\mathbb{S}_\nu)=2\int_{\O}|\mathbb{S}_\nu|^2\,dx,
$$
which is formally:
$$
2\nu\int_{\O}\rho|\mathbb{D}u|^2\,dx.
$$

In \cite{BrDe2003,BrDeLi2003}, Bresch and Desjardins introduced a new entropy of the system, now known as the BD entropy:
$$
\mathcal{E}_{BD}(\rho,u)=\int_\O \left(\rho \frac{|u+\nu\nabla\ln\rho|^2}{2}+\frac{\rho^\gamma}{\gamma-1}+2\kappa|\nabla\sro|^2\right)\,dx,
$$
with associated dissipation (again without source term)
$$
\mathcal{D}_{BD} (\rho,u)=\nu \int_\O \left(\frac{4}{\gamma}|\nabla \rho^{\gamma/2}|^2+\kappa\rho|\nabla^2\ln\rho|^2+2\rho |\A u|^2\right)\,dx,
$$
where $\A$ is the antisymmetric part of the matrix $\nabla u$.
The function $\ln \rho$ is not controled by the a priori estimates. But we can  use two other quantities, $\E$ and $\DE$, which are defined as follows:
\begin{eqnarray*}
&& \E(\sro, \sro u)=\int_{\O}\left(\rho\frac{|u|^2}{2}+(2\kappa+4\nu^2)|\nabla\sro|^2+\rho^\gamma\right)\,dx,\\
&&\DE(\sro,\sro u)=\int_{\O}\left(\nu  |\nabla \rho^{\gamma/2}|^2+\nu \kappa\left(|\nabla\srof|^4+|\nabla^2\sro|^2\right)+|\T|^2  \right)\,dx.
\end{eqnarray*}
From J\"ungel \cite{Jue10} the functions $\E$ and $\DE$, are equivalent, respectively, to $E(\sro,\sro u)+\mathcal{E}_{BD}(\sro,\sro u)$ and $\De(\sro,\sro u,\S)+\mathcal{D}_{BD}(\rho, u)$, whenever each term can be defined, and  $\S$ is the symmetric part of $\T$ with $\T=\sqrt{\nu\rho}\nabla u$. Namely there exists a universal constant $C_*$ such that for any such $(\rho, u)$:
\begin{eqnarray*}
&&\frac{1}{C_*}(E(\sro, \sro u)+\mathcal{E}_{BD}(\rho,u))\leq \E(\sro,\sro u)\leq C_*(E(\sro, \sro u)+\mathcal{E}_{BD}(\rho,u)),\\
&&\frac{1}{C_*}(\De(\S)+\mathcal{D}_{BD}(\rho,u))\leq \DE(\sro,\sro u,\T)\leq C_*(De(\S)+\mathcal{D}_{BD}(\rho,u)).
\end{eqnarray*}
 
\vskip0.3cm

The aim of this paper is to construct weak solutions for the system (\ref{eq_system}) using the a priori estimates provided by the energy and BD entropy inequalities. The main idea is to introduce a slightly stronger notion of weak solution that we call renormalized solutions. They are defined in the following way. For any function $\vfi\in W^{2,\infty}(\R^d)$, there exists two measures $\overline{R}_\vfi, R_{\vfi}\in\mathcal{M}(\R^+\times \O)$ such that the following is verified in the sense of distribution:
\begin{equation}\label{eq_renormalise}
\begin{split}
&\partial_t(\rho\vfi(u))+\Dv(\rho u \vfi(u))+\vfi'(u)\cdot\nabla\rho^\gamma\\
&\qquad\qquad -2\Dv(\sro\vfi'(u)(\sqrt{\nu}\S+\sqrt{\kappa}\Sk))\\
&\qquad =\sro\vfi'(u)\cdot f+\sqrt{\kappa}\Dv(\sro\vfi'(u)\M)+R_\vfi, 
\end{split}
\end{equation}
with $\Sk$ verifies (\ref{eq_quantic}), and $\S$ is the symmetric part of $\T$ such that for every $i,j,k$ between 1 and $d$:
\begin{equation}\label{eq_viscous_renormalise}
\sqrt{\nu\rho}\vfi_i'(u)[\T]_{jk}=\nu \partial_j(\rho\vfi'_i(u)u_k)-2\In{\nu}\sro u_k\vfi'_i(u)\partial_j\sro+ \overline{R}_\vfi,
\end{equation}
and
$$
\|R_\vfi\|_{\mathcal{M}(\R^+\times\O)}+\|\overline{R}_\vfi\|_{\mathcal{M}(\R^+\times\O)}\leq C\|\vfi''\|_{L^\infty}.
$$
The precise definition of weak solutions and renormalized weak solutions is laid out in definitions \ref{def_weak} and \ref{def_renormalise}.
Note that, taking a sequence of function $\vfi_n$ such that $\vfi_n(y)$ converges to $y_i$, but $\|\vfi''\|_{L^\infty}$ converges to 0, we can retrieve formally the equation (\ref{eq_system}). We will show that it is actually true that any renormalized weak solution is, indeed, a weak solution. 
\vskip0.3cm

For every 
$$
f\in \left[L^1\domt
\right]^d,\qquad \M\in\left[L^2\domt\right]^{d\times d},
$$
and every $(\sqrt{\rho_0}, \sqrt{\rho_0} u_0)$ such that $ \E(\sqrt{\rho_0}, \sqrt{\rho_0} u_0)$ is bounded,
we define 

\begin{equation}
\Mc=\E(\sqrt{\rho_0}, \sqrt{\rho_0} u)+\|f\|_{L^1\domt}+\|\M\|_{L^2\domt}.
\end{equation}

The main theorem of the paper is the following.

\begin{theo} \label{theo_main}
\begin{enumerate}
\item There exists a universal constant $\Cs>0$ such that the following is true.
Let $\sqrt{\rho_0}, \sqrt{\rho_0} u_0, f, \M$ be such that $\Mc$ is bounded.
Then, for any $\kappa\geq 0$, there exists a renormalized solution $(\sro,\sro u)$ of (\ref{eq_system}) with
\begin{eqnarray*}
&&\E(\sro(t), \sro(t) u(t))\leq \Mc,\qquad\mathrm{for}\ \ t>0,\\
&& \int_0^\infty\DE(\sro(t), \sro(t) u(t))\,dt\leq \Mc. 
\end{eqnarray*}
Moreover, for every $\vfi\in W^{2,\infty}(\R^d)$, 
\begin{eqnarray*}
&&\qquad \|R_\vfi\|_{\mathcal{M}(\R^+\times\O)}+\|\overline{R}_\vfi\|_{\mathcal{M}(\R^+\times\O)}\leq \Cs \|\vfi''\|_{L^\infty}\Mc.
\end{eqnarray*}
Moreover, $\rho\in C^0(\R^+;L^p(\O))$ for $1\leq p<\sup(3,\gamma)$, and $\rho u\in C^0(\R^+; L^{3/2}(\O)-weak)\cap  C^0(\R^+; L^{\frac{2\gamma}{\gamma+1}}(\O)-weak)$. 
\item Any renormalized solution  of \eqref{eq_system} is a weak solution of \eqref{eq_system} with the same initial value.
\item Consider any sequences $\kappa_n\geq0$,   converging to $\kappa\geq0$, $\nu_n>0$  converging to $\nu>0$,  $(\sqrt{\rho_{0,n}},\sqrt{\rho_{0,n}} u_{0,n}, f_n,\M_n)$ such that  $\Mcn$ is uniformly bounded, and an associated weak renormalized solution $(\sqrt{\rho_n}, \sqrt{\rho_n} u_n)$ to  \eqref{eq_system}. Then, there exists a subsequence (still denoted with $n$),  and $(\sro, \sro u)$ renormalized solution to  \eqref{eq_system} with initial value $(\sqrt{\rho_0}, \sqrt{\rho_0}u_0)$ and Planck constant $\kappa$, such that 
$\rho_n$ converges to $\rho$ in $ C^0(\R^+;L^p(\O))$ for $1\leq p<\sup(3,\gamma)$, and $\rho_n u_n$ converges to $\rho u$ in $C^0(\R^+; L^{3/2}(\O)-weak)\cap  C^0(\R^+; L^{\frac{2\gamma}{\gamma+1}}(\O)-weak)$. The function ${\T}_{,n}$ converges weakly in $L^2(\R^+\times\O)$ to $\T$. Moreover, for every function $\vfi\in W^{2,\infty}(\R^d)$, $\sqrt{\rho_n}\vfi(u_n)$ converges strongly in $L_{\mathrm{loc}}^p(\R^+\times\O)$ to $\sro \vfi(u)$ for $1\leq p< 6$.

\end{enumerate}

\end{theo}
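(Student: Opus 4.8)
The plan is to prove Theorem~\ref{theo_main} in three stages, mirroring its three parts, with the bulk of the work devoted to the construction in part~(1) and the stability/compactness argument in part~(3), part~(2) being a soft consequence of the renormalized formulation. For part~(1), I would not attempt to construct a solution directly from the PDE \eqref{eq_system}, since the degenerate viscosity $\sqrt{\nu\rho}\,\mathbb{S}_\nu$ together with the absence of any bound on $\nabla u$ makes the nonlinear convective term $\Dv(\rho u\otimes u)$ and the stress terms impossible to pass to the limit with energy/BD estimates alone. Instead I would introduce a carefully chosen regularized system---adding, for instance, a small drag term $r_0 u + r_1\rho|u|^2 u$, extra artificial viscosity $\eps\Delta u$, a density regularization, and capillarity---for which classical solutions exist by a fixed-point/Galerkin scheme, and for which the BD entropy structure survives. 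The point of working with renormalized solutions is that the renormalized equation \eqref{eq_renormalise} is exactly what one can hope to stabilize: testing against $\vfi(u)$ with $\vfi\in W^{2,\infty}$ keeps $\rho\vfi(u)$ bounded in every $L^p$ and $\sqrt{\rho}\,\vfi(u)$ bounded in $H^1$ via the BD dissipation $\int|\T|^2$, so that the defect measures $R_\vfi,\overline R_\vfi$ are controlled by $\|\vfi''\|_{L^\infty}$ uniformly. Then I would pass to the limit in the regularization parameters in the renormalized formulation, using the Aubin--Lions-type compactness $\rho\in C^0(\R^+;L^p)$ and weak-$\ast$ compactness of the measures, to obtain a renormalized solution satisfying the stated energy and dissipation bounds.

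For part~(2), the idea is the one already flagged in the introduction: given a renormalized solution, choose a sequence $\vfi_n\in W^{2,\infty}(\R^d)$ with $\vfi_n(y)\to y_i$ locally uniformly, $\vfi_n'\to e_i$ boundedly, and $\|\vfi_n''\|_{L^\infty}\to 0$. Then $R_{\vfi_n}\to 0$ and $\overline R_{\vfi_n}\to 0$ in $\mathcal{M}(\R^+\times\O)$ by the quantitative bound in part~(1), and one needs only verify that each remaining term in \eqref{eq_renormalise} and \eqref{eq_viscous_renormalise} converges to the corresponding term in the weak formulation of \eqref{eq_system}. This requires knowing that $\rho u$, $\rho|u|^2$, $\sqrt{\rho}\,u$ are integrable enough that $\rho\vfi_n(u)\to\rho u_i$, $\rho u\vfi_n(u)\to\rho u u_i$, and $\sqrt{\rho}\,\vfi_n'(u)\to\sqrt{\rho}\,e_i$ in the appropriate spaces; these follow from the energy bound (giving $\rho|u|^2\in L^\infty_tL^1_x$, hence $\rho u\in L^\infty_tL^{3/2}_x$ type bounds via interpolation with $\rho\in L^\infty_tL^\gamma_x$) plus dominated convergence. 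The recovery of \eqref{eq_viscous} from \eqref{eq_viscous_renormalise} is the same argument applied to $[\T]_{jk}$.

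For part~(3), the stability statement, I would run essentially the same limit passage as at the end of part~(1), but now starting from genuine weak renormalized solutions with parameters $(\kappa_n,\nu_n)\to(\kappa,\nu)$ and uniformly bounded $\Mcn$. The uniform energy bound gives $\sqrt{\rho_n}$ bounded in $L^\infty_tH^1_x$ (using the $|\nabla\sqrt{\rho_n}|^2$ term) and $\sqrt{\rho_n}\,u_n$ bounded in $L^\infty_tL^2_x$; the uniform dissipation bound gives $\T_n$ bounded in $L^2_{t,x}$, $\nabla\rho_n^{\gamma/2}$ bounded in $L^2_{t,x}$, and $\nabla^2\sqrt{\rho_n}$, $|\nabla\rho_n^{1/4}|^4$ bounded. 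From the continuity equation and these bounds one extracts, via Aubin--Lions, strong convergence of $\rho_n$ in $C^0(\R^+;L^p)$ for $p<\sup(3,\gamma)$ and of $\sqrt{\rho_n}$ in $L^2_{\mathrm{loc}}$; combined with the $H^1$ bound on $\sqrt{\rho_n}\vfi(u_n)$ (coming from $\T_n\in L^2$ through \eqref{eq_viscous_renormalise}) and a time-derivative estimate for $\rho_n\vfi(u_n)$ from the renormalized equation, Aubin--Lions again yields the strong $L^p_{\mathrm{loc}}$ convergence of $\sqrt{\rho_n}\vfi(u_n)$ for $p<6$ asserted in the theorem. One then identifies all weak limits in \eqref{eq_renormalise}--\eqref{eq_viscous_renormalise}, using that products of a strongly convergent factor ($\sqrt{\rho_n}$ or $\sqrt{\rho_n}\vfi(u_n)$) with a weakly convergent one ($\T_n$, $\nabla\sqrt{\rho_n}$, etc.) pass to the limit, and that the defect measures $R_{\vfi,n},\overline R_{\vfi,n}$, being bounded in $\mathcal{M}$ by $\Cs\|\vfi''\|_{L^\infty}\Mcn$, converge weakly-$\ast$ up to subsequence to measures with the same bound. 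The main obstacle throughout---and the reason the renormalized notion is introduced---is the identification of the nonlinear terms involving the velocity without any compactness for $u_n$ itself: the key technical device is that one never works with $u_n$ directly but always with $\sqrt{\rho_n}\vfi(u_n)$ for bounded nonlinear $\vfi$, for which the BD dissipation provides genuine (not merely weak) compactness via an $H^1$ bound, so that quadratic velocity quantities like $\rho_n u_n\otimes\vfi(u_n)$ can be written as a product $\sqrt{\rho_n}\,\psi(u_n)\cdot\sqrt{\rho_n}\,\vfi(u_n)$ of two strongly convergent factors; closing the gap between the regularized/renormalized level and the truncation-free limit $\vfi_n(y)\to y$ in a way compatible with the $L^p_{\mathrm{loc}}$ bounds is where the delicate estimates concentrate.
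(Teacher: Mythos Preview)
Your overall architecture matches the paper's: regularize by drag forces, obtain approximate solutions, verify the renormalized formulation at the approximate level, then pass to the limit using stability of the renormalized equations; part~(2) is exactly as you describe. Two points deserve comment.

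First, a step you gloss over is nontrivial in the paper. You write that for the regularized system ``classical solutions exist by a fixed-point/Galerkin scheme,'' so the renormalized identity is automatic. The paper does not construct smooth solutions: it takes the \emph{weak} solutions of the drag-force system from \cite{VasseurYu2015} and must then prove (Section~3, part~(3) of Theorem~\ref{theo_df}) that these weak solutions are renormalized. Since $\nabla u$ is not defined even at that level, this requires mollifying in $(t,x)$, introducing a density cutoff $\phi_m(\rho)$ so that $v_m=\phi_m(\rho)u$ has $\nabla v_m\in L^2$ (this uses the $r_1$ and $\kappa$ terms), applying DiPerna--Lions commutator estimates, and only then multiplying by $\vfi'(v_m)$ and sending $m\to\infty$. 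If you genuinely have classical approximate solutions this step disappears, but you then face it again one layer down when you remove the artificial viscosity.

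Second, your compactness mechanism for $\sqrt{\rho_n}\vfi(u_n)$ differs from the paper's and, as stated, has a gap. You claim an $H^1$ bound on $\sqrt{\rho_n}\vfi(u_n)$ ``coming from $\T_n\in L^2$ through \eqref{eq_viscous_renormalise}.'' But \eqref{eq_viscous_renormalise} controls $\partial_j(\rho\,\vfi_i'(u)\,u_k)$, not $\nabla(\sqrt{\rho}\,\vfi(u))$, and carries the measure defect $\overline{R}_\vfi$; there is no clean $H^1$ bound on $\sqrt{\rho_n}\vfi(u_n)$ available from the a priori estimates alone (remember $\nabla u_n$ is undefined). The paper avoids this entirely: from Aubin--Lions one gets $\rho_n\to\rho$ and $\rho_n u_n\to\rho u$ almost everywhere, hence $u_n\to u$ a.e.\ on $\{\rho>0\}$, while on $\{\rho=0\}$ one has $|\rho_n^\alpha H(u_n)|\le C\rho_n^\alpha\to 0$. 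Thus $\rho_n^\alpha H(u_n)\to\rho^\alpha H(u)$ a.e., and uniform $L^{5\gamma/(3\alpha)}$ bounds upgrade this to strong $L^p_{\mathrm{loc}}$ convergence. This pointwise argument is both simpler and what actually closes; your Aubin--Lions route for $\sqrt{\rho_n}\vfi(u_n)$ would need a separate justification of the gradient bound that the renormalized framework does not directly provide.
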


Note that all the results hold for any values of $\kappa$, including  the Navier-Stokes case $\kappa=0$. For $\kappa>0$ we can have from the a priori estimates, better controls on the solutions,  and convergence in stronger norms. The stability part of the result  includes the follwoing case of the semi-classical limits $0<\kappa_n\to 0$.   

\begin{coro} The semi-classical limit. 
Consider    $(\sqrt{\rho_{0}},\sqrt{\rho_{0}} u_0, f,\M)$ such that  the quantity $\Mc$ is bounded, and consider an associated weak renormalized solution $(\sqrt{\rho_\kappa}, \sqrt{\rho_\kappa} u_\kappa)$ to the quantum Navier-Stokes equations \eqref{eq_system} with $\kappa>0$.Then, there exists a subsequence (still denoted with $\kappa$),   and $(\sro, \sro u)$ renormalized solution to  the Navier-Stokes equations (\eqref{eq_system} with $\kappa=0$) with same initial value such that 
$\rho_\kappa$ converges to $\rho$ in $ C^0(\R^+;L^p(\O))$ for $1\leq p<\sup(3,\gamma)$, and $\rho_\kappa u_\kappa$ converges to $\rho u$ in $C^0(\R^+; L^{3/2}(\O)-weak)\cap  C^0(\R^+; L^{\frac{2\gamma}{\gamma+1}}(\O)-weak)$. The function ${\T}_\kappa$ converges weakly in $L^2(\R^+\times\O)$ to $\T$. Moreover, for every function $\vfi\in W^{2,\infty}(\R^d)$, $\sqrt{\rho_\kappa}\vfi(u_\kappa)$ converges strongly in $L_{\mathrm{loc}}^p(\R^+\times\O)$ to $\sro \vfi(u)$ for $1\leq p< 6$.
\end{coro}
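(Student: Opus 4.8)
The plan is to read off the corollary directly from the stability statement, part (3) of Theorem~\ref{theo_main}, by specializing it to a frozen set of data and a null sequence of Planck constants.

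Concretely, I would fix $\nu_n=\nu>0$ for all $n$, choose an arbitrary sequence $\kappa_n>0$ with $\kappa_n\to 0=\kappa$, and take the initial data and source terms independent of $n$,
\[
(\sqrt{\rho_{0,n}},\sqrt{\rho_{0,n}}u_{0,n},f_n,\M_n)=(\sqrt{\rho_0},\sqrt{\rho_0}u_0,f,\M).
\]
Then $\Mcn=\Mc$, which is bounded by hypothesis, so $\Mcn$ is (trivially) uniformly bounded; and for each $n$ the prescribed $(\sqrt{\rho_{\kappa_n}},\sqrt{\rho_{\kappa_n}}u_{\kappa_n})$ is, by assumption, a weak renormalized solution of \eqref{eq_system} with Planck constant $\kappa_n$. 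Hence all hypotheses of Theorem~\ref{theo_main}(3) are satisfied, with limiting Planck constant $\kappa=0$ and limiting viscosity $\nu$.

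Theorem~\ref{theo_main}(3) then furnishes a subsequence of $\kappa_n$ and a renormalized solution $(\sro,\sro u)$ of \eqref{eq_system} with $\kappa=0$ --- i.e.\ of the compressible Navier--Stokes equations --- with initial value $(\sqrt{\rho_0},\sqrt{\rho_0}u_0)$, such that $\rho_{\kappa_n}\to\rho$ in $C^0(\R^+;L^p(\O))$ for $1\le p<\sup(3,\gamma)$, $\rho_{\kappa_n}u_{\kappa_n}\to\rho u$ in $C^0(\R^+; L^{3/2}(\O)-weak)\cap C^0(\R^+; L^{\frac{2\gamma}{\gamma+1}}(\O)-weak)$, ${\T}_{\kappa_n}$ converges weakly in $L^2(\R^+\times\O)$ to ${\T}$, and $\sqrt{\rho_{\kappa_n}}\vfi(u_{\kappa_n})\to\sro\vfi(u)$ strongly in $L^p_{\mathrm{loc}}(\R^+\times\O)$ for $1\le p<6$, for every $\vfi\in W^{2,\infty}(\R^d)$. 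Relabelling the extracted subsequence by $\kappa$ yields precisely the assertions of the corollary.

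All the analytic substance --- the uniform energy and BD-entropy bounds, the compactness of $\rho_n$ and $\rho_n u_n$, the identification of the limiting tensor ${\T}$ in \eqref{eq_viscous_renormalise}, and the passage to the limit in the renormalized formulation \eqref{eq_renormalise} --- is already packaged in Theorem~\ref{theo_main}(3), so I anticipate no genuine obstacle. The only thing to flag is a cosmetic mismatch: part (3) is stated for a sequence $\kappa_n$, whereas the corollary is phrased with a continuous parameter $\kappa\downarrow 0$; this is harmless, since applying the sequential statement to an arbitrary $\kappa_n\to 0$ is exactly what the continuous formulation means here.
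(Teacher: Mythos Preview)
Your proposal is correct and matches the paper's approach: the corollary is not given a separate proof but is presented as the special case of the stability result in part~(3) of Theorem~\ref{theo_main} obtained by freezing the data and viscosity and sending $\kappa_n\to 0$. The paper explicitly notes, just before stating the corollary, that ``the stability part of the result includes the following case of the semi-classical limits $0<\kappa_n\to 0$,'' which is precisely your argument.
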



For this problem, the a priori estimates include control on the gradient of some density quantities. This provides compactness on both the density $\rho$ and the momentum $\rho u$. The difficulty is due to the fact that we have only a control of $\rho u^2$ in $L^\infty(\R^+,L^1(\O))$.
This cannot prevent concentration phenomena in the construction of solutions in the term $\rho u\otimes u$. When $\kappa=0$, the same problem arises for the term $\sro u\nabla\sro$, since the only a priori estimates available on both $\sro u$ and $\nabla \sro$ are in $L^\infty(\R^+,L^2(\O))$.  
\vskip0.3cm
This problem can be avoided by the introduction of additional terms as drag forces or cold pressure, as proposed by Bresch and Desjardins \cite{BrDe2007}. In the case of drag forces, the system is 
\begin{equation}
\begin{split}
\label{eq_system_r}
&\rho_t+\Dv(\rho\u)=0,
\\&(\rho\u)_t+\Dv(\rho\u\otimes\u)+\nabla\rho^{\gamma}-2 \Dv(\sqrt{\nu\rho}\mathbb{S}_\nu+\sqrt{\kappa \rho}\mathbb{S}_\kappa)\\
&\qquad\qquad\qquad=-r_0u-r_1\rho |u|^2u+\sro f + \sqrt{\kappa} \Dv(\sro \M),
\end{split}
\end{equation}
still endowed with \eqref{eq_viscous} and \eqref{eq_quantic}. The solutions of this  kind of augmented systems can be explicitly  constructed via a Faedo-Galerkin method. This was first performed by Zatorska in \cite{zatorska2012} for the classical case ($\kappa=0$) with chemical reactions and where the drag forces were replaced by cold pressure terms as $\nabla (\rho^{-N})$, for $N$ big enough. In the quantum case solutions have been constructed in \cite{GiLaVi2015} in the case with cold pressure, and in \cite{VasseurYu2015} in the case of drag forces as \eqref{eq_system_r}. 
\vskip0.3cm
When considering the system without additional terms (as drag forces or cold pressure), it has been shown  in \cite{MelletVasseur2007} that, in the classical case $\kappa=0$, solutions of \eqref{eq_system} verify formally that 
\begin{equation}\label{eq_log}
\int_{\O} \rho |u|^2\ln(1+|u|^2)\,dx
\end{equation}
is uniformly bounded in time, provided that the bound is valid at $t=0$. It was also shown that a sequence of solutions, such that this quantity is uniformly bounded at $t=0$ (together with the energy and BD entropy), will converge, up to a subsequence, to a solution of the Navier-Stokes equation.
\vskip0.3cm
The standard way to construct weak solutions of systems verifying the weak stability is to construct solutions to an approximated problem (as the Faedo Galerkin method) for which the a priori estimates are still uniformly valid. Usually, those solutions are smooth so that every formal computation is actually true. However, in this context, the approximated problem needs to be compatible with the usual energy, the BD entropy, and the additional mathematical inequality \eqref{eq_log}. Only recently, such an approximated problem has been found \cite{LiXin2015} in dimension two, and in dimension three with unphysical stress tensors. Moreover, the regularity of the associated solutions is limited (not $C^\infty$).  For solutions constructed via a Faedo-Galerkin method, the energy and BD entropy estimates can be verified at the approximated level, since $u$  is an admissible test function. This is not the case for the mathematical inequality \eqref{eq_log}.  In \cite{VasseurYu2015-2}, the construction of solutions to \eqref{eq_system} with $\kappa=0$ was obtained following a different strategy. It was shown that  limits of solutions to \eqref{eq_system_r}  when $\kappa$ converges to 0 verify \eqref{eq_log}, even if it is not verified for $\kappa>0$. The idea was to show that the quantity
$$
\int_\O \rho\vfi(|u|)\,dx
$$
are uniformly bounded for smooth and bounded functions $\vfi$. This allowed to recover \eqref{eq_log} for $\kappa=0$ thanks to a sequence of approximations $\vfi_n$ of the function $y\to y^2\ln(1+y^2)$.
\vskip0.3cm
Spririto and Antonelli showed in \cite{AntSpi2016} that, formally, an estimate on \eqref{eq_log} can still be obtained on solutions to \eqref{eq_system} for a range of $\kappa$ close (or bigger) than $\nu$. But this estimate cannot be used for the semi-classical limit. 
\vskip0.3cm
The notion of renormalized solutions is inspired from \cite{VasseurYu2015-2}. However, this notion is not used to recover an estimate on \eqref{eq_log}, which is known to be not verified for some range of $\kappa$. The idea is that we can obtain the stability of renormalized solutions, since the notion avoids the problem of concentration. Consider, for instance, the term
$$
\rho_n u_n \vfi(u_n).
$$
Since $\rho_n$ and $\rho_n u_n$ are compact in $L^p$, for a $p>1$, we can show that, up to a subsequence, $\rho_n$ converges almost everywhere to a function $\rho$, and $u_n$ converges almost everywhere on $\{(t,x) \ | \ \ \rho(t,x)>0\}$ to a function $u$. Hence $\rho_n u_n \vfi(u_n)$ converges almost everywhere to $\rho u\vfi(u)$. The function $\vfi$ prevent concentration, and so $\rho_n u_n \vfi(u_n)$ converges strongly to $\rho u\vfi(u)$. 
\vskip0.3cm
The challenge is then to show that the renormalized solutions, are indeed, weak solutions in the general sense (see Definition \ref{def_weak}).
It is obtained by considering a sequence of bounded functions $\vfi_n$, uniformly dominated by $y\to|y|$, and  converging almost everywhere to $y \to y_i$, for a fixed direction $i$. This provides  the momentum equation for $\rho u_i$ at the limit $n\to \infty$. The key point, is that, while performing this limit, the functions $\rho, u$ are fixed. Considering, for example, the term 
$$
\rho u \vfi_n(u).
$$
The function $\vfi_n(u)$ converges almost everywhere to $u_i$. And, thanks to the Lebesgue's dominated convergence Theorem, $\rho u\vfi_n(u)$ converges in $L^1$ to $\rho u u_i$. Note that the boundedness of $\rho u u_i$ in $L^1$ is enough for this procedure.  Choosing the sequence of $\vfi_n$ such that $\|\vfi_n''\|_{L^\infty}$ converges to 0, we show that the extra terms $R_{\vfi_n}$ and $\overline{R}_{\vfi_n}$ converge to 0 when $n$ converges to $\infty$. 
\vskip0.3cm
The main difference with \cite{VasseurYu2015-2} is that we do not need to reconstruct the energy inequality nor the control on \eqref{eq_log} via the sequence of functions $\vfi_n$. Hence, we do not need an explicit form of the terms involving second derivatives of $\vfi$ in the definition of renormalized solutions. Those terms (for which we do not have stability) are dumped in the extra terms $R_\vfi$ and $\overline{R}_\vfi$.

\section{Preliminary results and main ideas}

We are first working on the System \eqref{eq_system_r} with drag forces. The definitions will be valid for all the range of parameter, $r_0\geq0, r_1\geq0, \kappa\geq0, \nu>0$. The energy and the BD entropy on solutions to \eqref{eq_system_r} provide controls on 
\begin{eqnarray*}
&& \Er(\sro, \sro u)=\int_{\O}\left(\rho\frac{|u|^2}{2}+(2\kappa+4\nu^2)|\nabla\sro|^2+\rho^\gamma+r_0(\rho-\ln\rho)\right)\,dx,\\
&&\DEr(\sro,\sro u)=\\
&&\qquad\qquad\int_{\O}\left(\nu  |\nabla \rho^{\gamma/2}|^2+\nu \kappa\left(|\nabla\srof|^4+|\nabla^2\sro|^2\right)+|\T|^2  +r_0 |u|^2+r_1\rho |u|^4\right)\,dx.
\end{eqnarray*}
From these quantities, we can obtain the following a priori estimates. For the sake of completeness we show how to obtain them in the appendix.
\begin{equation}
\begin{split}
\label{eq_a_priori}
&\sro\in L^\infty\domt, \qquad \nabla\sro\in L^\infty\domt, \qquad \nabla \rho^{\gamma/2}\in L^2\domt\\ 
&\sro u \in L^\infty\domt, \qquad \T\in L^2\domt, \qquad \sqrt{\kappa}\nabla^2\sro\in L^2\domt,\\
&\kappa^{1/4}\nabla \srof\in L^4\domf, \qquad  r_1^{1/4}\srof u\in L^4\domf,\\
&r_0^{1/2} u\in L^2\domt, \qquad r_0\ln\rho\in L^\infty\domo.
\end{split}
\end{equation}
Note that those a priori estimates are not sufficient to define $\nabla u$ as a function. The statement that $\sro \nabla u$ is bounded in $L^2$ means that there exists a function $\T\in L^2\domt$ such that: 
$$
\sqrt{\nu}\sro \T=\Dv(\rho u)-\sro u\cdot \nabla\sro,
$$
which is, formally, $\rho\nabla u$.
The definition of weak solutions and renormalized weak solutions for the system \eqref{eq_system_r} are as follows.
\begin{defi}\label{def_weak}
We say that $(\sro,\sro u)$ is a weak  solution to \eqref{eq_system_r}, if   it verifies the a priori estimates \eqref{eq_a_priori}, and  
for any function $\psi\in \Cin$:
\begin{eqnarray*}
&&\int_0^\infty\int_\O \left(\rho \psi_t +\rho  u\cdot \nabla\psi \right)dx\, dt=0,\\
&&\int_0^\infty\int_\O \left(\rho u \psi_t +(\rho u\otimes u-{2}\sqrt{\nu\rho} \S-2\sqrt{\kappa\rho}\Sk-\sqrt{\kappa \rho} \M)\cdot \nabla\psi+\F \psi \right)dx\, dt=0,
\end{eqnarray*}
with $\S$ the symmetric part of $\T$ verifying \eqref{eq_viscous}, $\Sk$ verifying \eqref{eq_quantic}, and  
\begin{equation}\label{eq_F}
F=-2\rho^{\gamma/2}\nabla\rho^{\gamma/2}-r_0 u-r_1\rho|u|^2u +\sro f,
\end{equation}
and  for any $\overline{\psi}\in C^\infty_c(\R)$:
\begin{eqnarray*}
&&\lim_{t\to0}\int_\O \rho(t,x)\overline{\psi}(x)\,dx=\int_\O \rho_0(x)\overline{\psi}(x)\,dx,\\
&&\lim_{t\to0}\int_\O \rho(t,x)u(t,x)\overline{\psi}(x)\,dx=\int_\O \rho_0(x)u_0(x)\overline{\psi}(x)\,dx.
\end{eqnarray*}

\end{defi}

\begin{defi}\label{def_renormalise}
We say that $(\sro,\sro u)$ is a  renormalized weak  solution to (\ref{eq_system_r}), if  it verifies the a priori estimates (\ref{eq_a_priori}), and  
 for  any function $\vfi\in W^{2,\infty}(\R^d)$, there exists two measures $R_{\vfi}, \overline{R}_\vfi\in \mathcal{M}(\R^+\times\O)$, with 
 $$
 \|R_{\vfi}\|_{ \mathcal{M}(\R^+\times\O)}+ \|\overline{R}_{\vfi}\|_{ \mathcal{M}(\R^+\times\O)}\leq C \|\vfi''\|_{L^\infty(\R)},
 $$
 where the constant $C$ depends only on the solution $(\sro,\sro u)$, and 
 for any function $\psi\in \Cin$, 
\begin{eqnarray*}
&&\int_0^\infty\int_\O \left(\rho \psi_t +\rho  u\cdot \nabla\psi \right)dx\, dt=0,\\
&&\int_0^\infty\int_\O \left(\rho \vfi(u) \psi_t +\left(\rho \vfi(u) u-({2}\sqrt{\nu\rho} \S{+2}\sqrt{\kappa\rho}\Sk{+}\sqrt{\kappa \rho} \M) \vfi'(u)\right)\cdot \nabla\psi\right.\\
&&\qquad\qquad\qquad\qquad\qquad\qquad\qquad\left.+\F\cdot \vfi'(u) \psi \right)dx\, dt=\left \langle R_{\vfi}, \psi\right\rangle,
\end{eqnarray*}
with $\S$ the symmetric part of $\T$ verifying \eqref{eq_viscous_renormalise}, $\Sk$ verifying \eqref{eq_quantic}, {$f$ given by \eqref{eq_F},} 
and  for any $\overline{\psi}\in C^\infty_c(\R)$:
\begin{eqnarray*}
&&\lim_{t\to0}\int_\O \rho(t,x)\overline{\psi}(x)\,dx=\int_\O \rho_0(x)\overline{\psi}(x)\,dx,\\
&&\lim_{t\to0}\int_\O \rho(t,x)u(t,x)\overline{\psi}(x)\,dx=\int_\O \rho_0(x)u_0(x)\overline{\psi}(x)\,dx.
\end{eqnarray*}

\end{defi}
Let us  define 
$$
\Mcr=\Er(\sqrt{\rho_0}, \sqrt{\rho_0} u_0)+\|f\|_{L^1\domt}+\|\M\|_{L^2\domt}.
$$

The main theorem proved in this paper is the following. 
\begin{theo} \label{theo_df}
\begin{enumerate}
\item There exists a universal constant $\Cs>0$ such that the following is true.
Let $\sqrt{\rho_0}, \sqrt{\rho_0} u_0, f, \M$ be such that $\Mcr$ is bounded.
Then, for any $\kappa\geq 0$, $r_0\geq0$, $r_1\geq0$, there exists a renormalized solution $(\sro,\sro u)$ of (\ref{eq_system_r}) with
\begin{eqnarray*}
&&\Er(\sro(t), \sro(t) u(t))\leq \Mcr,\qquad\mathrm{for}\ \ t>0,\\
&& \int_0^\infty\DEr(\sro(t), \sro(t) u(t))\,dt\leq \Mcr. 
\end{eqnarray*}
Moreover, for every $\vfi\in W^{2,\infty}(\R^d)$, 
\begin{eqnarray*}
&&\qquad \|R_\vfi\|_{\mathcal{M}(\R^+\times\O)}+\|\overline{R}_\vfi\|_{\mathcal{M}(\R^+\times\O)}\leq \Cs \|\vfi''\|_{L^\infty}\Mc.
\end{eqnarray*}
Moreover, $\rho\in C^0(\R^+;L^p(\O))$ for $1\leq p<\sup(3,\gamma)$, and $\rho u\in C^0(\R^+; L^{3/2}(\O)-weak)\cap  C^0(\R^+; L^{\frac{2\gamma}{\gamma+1}}(\O)-weak)$. 
\item Any renormalized solution  of \eqref{eq_system_r} is a weak solution of \eqref{eq_system_r} with the same initial value.
\item If $r_0>0$, $r_1>0$, and $\kappa>0$, then any weak solution to \eqref{eq_system_r} is also a renormalized solution to \eqref{eq_system_r}  with the same initial value.
\item Consider any sequences $\kappa_n\geq0$, $r_{0,n}\geq0$, $r_{1,n}\geq0$,  $\nu_n>0$,   converging respectively to $\kappa\geq0$,  $r_0\geq0$, $r_1\geq0$, and  $\nu>0$,  $(\sqrt{\rho_{0,n}}),\sqrt{\rho_{0,n}} u_{0,n}, f_n,\M_n)$ such that  $\Mcnr$ is uniformly bounded, and an associated weak renormalized solution $(\sqrt{\rho_n}, \sqrt{\rho_n} u_n)$ to  \eqref{eq_system_r}. Then, there exists a subsequence (still denoted with $n$),  and $(\sro, \sro u)$ renormalized solution to  \eqref{eq_system_r} with initial value $(\sqrt{\rho_0}, \sqrt{\rho_0}u_0)$  Planck constant $\kappa$, and drag forces coefficients $r_0,r_1$ such that 
$\rho_n$ converges to $\rho$ in $ C^0(\R^+;L^p_\mathrm{loc}(\O))$ for $1\leq p<\sup(3,\gamma)$, and $\rho_n u_n$ converges to $\rho u$ in $C^0(\R^+; L^{3/2}(\O)-weak)\cap  C^0(\R^+; L^{\frac{2\gamma}{\gamma+1}}(\O)-weak)$. The function ${\T}_{,n}$ converges weakly in $L^2(\R^+\times\O)$ to $\T$. Moreover, for every function $\vfi\in W^{2,\infty}(\R^d)$, $\sqrt{\rho_n}\vfi(u_n)$ converges strongly in $L_{\mathrm{loc}}^p(\R^+\times\O)$ to $\sro \vfi(u)$ for $1\leq p< 10\gamma/3$.

\end{enumerate}
\end{theo}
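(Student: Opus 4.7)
The four items are interleaved, and I would treat them in the order (2), (3), (1), (4): parts (2) and (3) relate the notions of weak and renormalized solutions; part (1) is then obtained by combining known existence results at positive parameters with (3) and a perturbation via (4); and most of the work lies in the stability statement (4). The guiding principle, inherited from \cite{VasseurYu2015-2}, is that the renormalizing function $\vfi$ prevents the concentration of $u_n$ which otherwise obstructs passage to the limit in $\rho_n u_n\otimes u_n$.

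For (2), fix a renormalized solution and choose a sequence $\vfi_n\in W^{2,\infty}(\R^d)$ with $\vfi_n(y)\to y_i$ pointwise, $|\vfi_n(y)|\le C|y|$, $|\vfi_n'(y)|\le C$, and $\|\vfi_n''\|_{L^\infty}\to 0$. The structural bound in Definition~\ref{def_renormalise} forces $\|R_{\vfi_n}\|_{\mathcal{M}}+\|\overline{R}_{\vfi_n}\|_{\mathcal{M}}\to 0$, and the a priori estimates \eqref{eq_a_priori} furnish uniform integrable majorants ($\rho|u|^2\in L^\infty_t L^1_x$ for the convective flux, $\sqrt{\rho}\,\T\in L^2_t L^1_{x,\mathrm{loc}}$ for the viscous flux, and analogous control for the quantum and source terms) that allow Lebesgue dominated convergence on $\{\rho>0\}$, yielding Definition~\ref{def_weak}. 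For (3), the positivity of $r_0,r_1,\kappa$ raises the a priori regularity sufficiently that $u$ is a bona fide measurable function. I would then renormalize in DiPerna--Lions style: mollify the momentum equation in $x$, test against $\vfi'$ evaluated on a suitable regularization of $u$, bound the resulting commutators by $\|\vfi''\|_{L^\infty}$ times quantities estimated through the entropy dissipation $\DEr$, and pass the regularization parameter to $0$ to produce $R_\vfi$ and $\overline{R}_\vfi$ with the required measure bound.

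For (1), when $r_0,r_1,\kappa>0$ the existence of a weak solution is known from \cite{VasseurYu2015} (in the cold-pressure variant, from \cite{GiLaVi2015}), and (3) promotes it to a renormalized solution. To reach a general $(\kappa,r_0,r_1)\in[0,\infty)^3$, I would perturb to $(\kappa+\eps,r_0+\eps,r_1+\eps)$, use $\Mcr$ to bound $\Er$ and $\int_0^\infty\DEr\,dt$ uniformly in $\eps$, and invoke (4) to extract a subsequential limit, which is then a renormalized solution with the stated bounds at the target parameters.

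The main obstacle is (4). Uniform control of $\nabla\sro_n$ in $L^\infty_t L^2_x$ together with $\del_t\rho_n=-\Dv(\rho_n u_n)$ bounded in a negative Sobolev space gives, by Aubin--Lions, strong convergence of $\rho_n$ in $C^0_t L^p_x$ for $1\le p<\sup(3,\gamma)$; hence $\sro_n\to\sro$ a.e.\ and, writing $\sro_n u_n=\sro_n\cdot u_n$, $u_n\to u$ a.e.\ on $\{\rho>0\}$. For any bounded $\vfi\in W^{2,\infty}$, $\sro_n\vfi(u_n)$ is uniformly bounded in $L^\infty_t L^2_x$ and converges a.e.; interpolating with the spatial derivative bound deduced from $\nabla\sro_n$ and $\sro_n\vfi'(u_n)\T_n$ upgrades this to strong $L^p_{\mathrm{loc}}$ convergence for $p<10\gamma/3$. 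With this in hand, the convective flux $\rho_n\vfi(u_n)u_n=\sro_n\vfi(u_n)\cdot\sro_n u_n$ passes to the limit by combining strong and weak convergence, while $\sro_n\vfi'(u_n)\T_n$ and the quantum flux $\sro_n\vfi'(u_n)\Sk$ couple the strong convergence of the first factor with, respectively, the weak $L^2$ convergence of $\T_n$ and the $L^2_t H^2_x$ control of $\sro_n$ coming from \eqref{eq_quantic}. The residual measures $R_{\vfi_n},\overline{R}_{\vfi_n}$ are uniformly bounded by $\Cs\|\vfi''\|_{L^\infty}\Mcnr$ and thus converge weakly-$\ast$ to limits obeying the same bound. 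The subtlest sub-step is the identification of \eqref{eq_viscous_renormalise} in the limit: one must verify that $\nu\del_j(\rho\vfi_i'(u)u_k)-2\nu\sro u_k\vfi_i'(u)\del_j\sro$ is produced as the limit of the corresponding expressions in $n$, up to a measure controlled by $\|\vfi''\|_{L^\infty}$, and this requires a careful coupling of the strong convergence of $\sro_n\vfi(u_n)$ and $\sro_n\vfi'(u_n)u_n$ with the weak convergence of $\T_n$ and $\nabla\sro_n$.
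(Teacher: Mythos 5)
Your overall architecture is the right one and matches the paper: (2) and (3) relate the two solution notions, (4) is the stability workhorse, (1) is obtained by combining the known existence theory at $r_0,r_1,\kappa>0$ from \cite{VasseurYu2015} with (3) to promote the constructed weak solution to a renormalized one, and then (4) to pass to arbitrary nonnegative parameters. Your sketch of (2) (choose $\vfi_n\to y_i$ with $\|\vfi_n''\|_\infty\to0$, dominated convergence with $\rho,u$ fixed) and the outline of (4) (Aubin--Lions for $\rho_n$, a.e.\ convergence of $u_n$ on $\{\rho>0\}$, strong $L^p_{\mathrm{loc}}$ convergence of $\sro_n\vfi(u_n)$, weak $L^2$ convergence of $\T_n$, weak-$\ast$ compactness of the uniformly bounded measures $R_{n,\vfi}$) are essentially the paper's arguments.

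There is, however, a genuine gap in (3). You propose to "mollify the momentum equation in $x$, test against $\vfi'$ evaluated on a suitable regularization of $u$, bound the resulting commutators by $\|\vfi''\|_{L^\infty}$." This is precisely the naive approach the paper warns does not close. The obstruction is that $\nabla u$ is not bounded in any $L^p$; only $\T$, which formally equals $\sqrt{\nu\rho}\,\nabla u$, lies in $L^2$. When you test the viscous flux against $\psi\vfi'(\overline{u}_\eps)$ you produce the term $\sqrt{\nu\rho}\,\vfi''(\overline{u}_\eps)\nabla\overline{u}_\eps$, and $\nabla\overline{u}_\eps$ is \emph{not} $\overline{\T}_\eps/\sqrt{\nu\rho}$; there is no way to control it from the a priori estimates, and no DiPerna--Lions commutator lemma rescues this because the required hypothesis $\partial g\in L^p$ is unavailable for $g=u$. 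The positivity of $r_0,r_1,\kappa$ does not put $\nabla u$ in any $L^p$ either: $r_0>0$ gives $u\in L^2$, $r_1>0$ gives $\rho^{1/4}u\in L^4$, $\kappa>0$ gives $\nabla\rho^{1/4}\in L^4$ and $\nabla^2\sro\in L^2$, but none of these control $\nabla u$ on vacuum. The missing idea is the cut-off in the \emph{density}: introduce $\phi_m(\rho)$ compactly supported in $(0,\infty)$ and work with $v_m=\phi_m(\rho)u$ instead of $u$. For $m$ fixed one has $\nabla v_m\in L^2$, because $\phi_m(\rho)/\sro$ is bounded (so $\phi_m(\rho)\T/(\sqrt{\nu}\sro)\in L^2$) and $\sro\,\phi_m'(\rho)$ is bounded (so the term $\sro\,\phi_m'(\rho)\,\rho^{1/4}u\cdot\nabla\rho^{1/4}$ is controlled using precisely the $r_1>0$ and $\kappa>0$ estimates). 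One then derives the equation on $\rho v_m$, renormalizes it by testing with $\overline{\psi\vfi'(\overline{v_m}_\eps)}_\eps$ and applying the Lions commutator lemma with $g=\rho$, $h=v_m$ and with $g=\rho u$, $h=v_m$, and only afterwards sends $m\to\infty$, using that $r_0>0$ forces $\rho>0$ a.e.\ so that $v_m\to u$ and $\phi_m(\rho)\to 1$ pointwise. Without this two-level truncation (first in $\rho$, then the $\eps$-mollification of $v_m$), the commutator argument you describe cannot be carried out, so this step of your proposal as written would fail.

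A minor further remark: your description of (3) attributes the $\|\vfi''\|_{L^\infty}$ factor to the commutator estimate. In the actual argument the commutator estimates of Lemma~\ref{lemm_DPLions} carry no $\vfi''$; the $\vfi''$ dependence appears when you expand $\nabla\vfi(\overline{v_m}_\eps)$ or $\nabla\vfi'(v_m)$ and read off the residual $R_\vfi=\vfi''(u)\frac{\T}{\sqrt{\nu}}(2\sqrt{\nu}\S+2\sqrt{\kappa}\Sk+\sqrt{\kappa}\M)$ and $\overline{R}_\vfi=-\vfi''_i(u)\frac{\T}{\sqrt{\nu}}\sro u$; the commutator lemma is only needed to pass to the limit in $\eps$.
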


\begin{rem}
We can actually show in (1) of the previous theorem that there is one solution verifying 
$$
\overline{R}_{\vfi,i,k,j}=\sum_{l=1}^d \sro u_j \vfi''_{i,l}(u){\T}_{k,l}.
$$
But this is not needed to show that a renormalized solution is a weak solution. We cannot do the same for the term $R_\vfi$.
\end{rem}

Note that Theorem \ref{theo_df} together with \cite{VasseurYu2015} implies Theorem \ref{theo_main}. Indeed, \cite{VasseurYu2015} provides the  construction of weak solutions to \eqref{eq_system_r} with positive $r_0, r_1,\kappa$.  Part (3) in Theorem \ref{theo_df} insures that this solution is actually a renormalized solution.  Considering sequences $r_{0,n}\geq0$ and $r_{1,n}\geq0$ both converging to 0, part (4) of Theorem \ref{theo_df} provides at the limit a renormalized solution to \eqref{eq_system}.

\section{From weak solutions to renormalized solutions in the presence of drag forces}
This section is dedicated to the proof of part (3) of Theorem \ref{theo_df}. In the whole section we will assume that 
$\kappa>0$, $r_0>0$, and $r_1>0$, and we will consider a fixed weak solution $(\sro, \sro u)$ as in Definition \eqref{def_weak}. Let us define $\overline{g}_\varepsilon$ for any function $g$ as
$$
\overline{g}_\varepsilon(t,x)=\eta_\varepsilon\ast g(t,x), \qquad t>\eps,
$$
where 
$$
\eta_\eps(t,x)=\frac{1}{\eps^{d+1}}\eta_1(t/\eps,x/\eps),
$$
with $\eta_1$ a smooth nonnegative even function compactly supported in the space time ball of radius 1, and with integral equal to 1. 

\vskip0.3cm
Formally, we can show that a weak solution is also a renormalized solution by multiplying the equation by $\vfi'(u)$. However, solutions of \eqref{eq_system_r} have a limited amount of regularity. This has to be performed carefully. Let us explain the difficulties. First let us focus on the term
$$
\Dv (\sqrt{\nu\rho}\S).
$$
One way to obtain the renormalized equation from the weak one, is to consider the family of test functions $\psi\vfi'(\overline{u}_\varepsilon)$. We need to pass to the limit in the expression
$$
\int_0^\infty\int_\O \psi\sqrt{\nu\rho}\S \nabla \vfi'(\overline{u}_\eps)\,dx\,dt.
$$
But we cannot pass into the limit for the term $\sqrt{\nu\rho} \nabla \vfi'(\overline{u}_\eps)=\sqrt{\nu\rho} \vfi''(\overline{u}_\eps)\nabla \overline{u}_\eps$. Note that this term is different from $\overline{\T}_\eps$. The problem is that $\nabla u$ is not bounded in any functional space. 
\vskip0.3cm
An other difficulty is to obtain, in  the sense of distribution, the equality
$$
\vfi'(u)\partial_t(\rho u)= \partial_t(\rho\vfi(u))+(\vfi'(u)u-\vfi(u))\partial_t\rho.
$$
Indeed, we have absolutely no estimate available on $\partial_t u$.  Following Di Perna and Lions, \cite{Lions1996}, this can be obtained using commutators estimates which requires more a priori estimates than can be formally intuited. 
\vskip0.3cm
To solve these problems, we  need to introduce a cut-off function in $\rho$, $\phi_m(\rho)$ where $\phi_m$ is defined for every $m>0$ as 
\begin{equation}\label{eq_phirho}
\phi_m(y)=\left\{
 \begin{array}{ll}
0,& \mathrm{for} \ \ 0\leq y\leq \dfrac{1}{2m},\\[0.3cm]
2 m y-1,&  \mathrm{for} \ \ \dfrac{1}{2m}\leq y\leq \dfrac{1}{m},\\[0.3cm]
1, & \mathrm{for} \ \ \dfrac{1}{m}\leq y\leq m,\\[0.3cm]
2-y/m, & \mathrm{for} \ \ m\leq y\leq 2m,\\[0.3cm]
0,  &\mathrm{for} \ \ y\geq 2m.
 \end{array}
 \right.
 \end{equation}
 We will now work on 
 \begin{equation}\label{def_vm}
 v_m=\phi_m(\rho)u
 \end{equation}
  instead of $u$. Note that 
$$
 \nabla v_m =\frac{\phi_m(\rho)}{\sqrt{\nu\rho}}\T+4\sro\phi'_m(\rho)\srof u \nabla \srof.
 $$
 For $m$ fixed, $\frac{\phi_m(\rho)}{\sqrt{\nu\rho}}$ and $\sro\phi'_m(\rho)$ are bounded, and so $ \nabla v_m$ is bounded in $L^2$, thanks to the a priori estimates obtained from $\kappa>0$ and $r_1>0$ in \eqref{eq_a_priori}.
 \vskip0.3cm
 Obtaining the equation on $v_m$ is pretty standard, thanks to the extra regularity on the density provided by the quantum term, and the BD entropy. However, to highlight where are the difficulties, we will provide a complete proof. 
 
 \subsection{Preliminary lemmas}
 In this subsection, we introduce two standard lemmas to clarify the issues. The second one use the commutator estimates of Di Perna and Lions \cite{Lions1996}.
 \begin{lemm}\label{lemm_trivial}
 Let $g\in L^p(\R^+\times\O)$ and $h\in L^q(\R^+\times\O)$ with $1/p+1/q=1$ and $H\in W^{1,\infty}(\R)$. We denote by $\partial$ a partial derivative with respect to one of the dimension (time or space). Then we have:
 \begin{eqnarray*}
 &&\int_0^\infty\int_\O \overline{g}_\eps h \,dx\,dt=\int_0^\infty\int_\O g \overline{h}_\eps \,dx\,dt,\\
 &&\lim_{\eps\to0}\int_0^\infty\int_\O \overline{g}_\eps h \,dx\,dt=\int_0^\infty\int_\O g h \,dx\,dt,\\
 &&\partial \overline{g}_\eps=\overline{[\partial g]}_\eps,\\
&& \lim_{\eps\to0}\|H(\overline{g}_\eps)-H(g)\|_{L^s_{\mathrm{loc}}(\R^+\times\O)}=0,\qquad \mathrm{for \ any \ } 1\leq s<\infty,\\
 &&\partial H(g)=H'(g)\partial g\in L^r(\R^+\times\O) \qquad \mathrm{as \ long \  as \  } \ \partial g\in L^r(\R^+\times\O).
  \end{eqnarray*}
  
 \end{lemm}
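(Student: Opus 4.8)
The final statement to prove is Lemma~\ref{lemm_trivial}, which collects five elementary facts about the mollification operator $\overline{(\cdot)}_\eps = \eta_\eps \ast (\cdot)$ and about compositions with Lipschitz functions. Let me sketch how I would establish each.

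\textbf{Plan.} The proof is a routine exercise in mollifier technology, so I would simply verify the five displayed assertions in order, taking care that $\eta_1$ is even (so that mollification is self-adjoint on the appropriate pairing) and that all convolutions take place in the region $t > \eps$ where $\overline{g}_\eps$ is defined.

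\emph{First identity.} Since $\eta_1$ is even, $\eta_\eps$ is even, so $\eta_\eps \ast$ is self-adjoint with respect to the $L^2$ pairing: by Fubini's theorem,
$$
\int_0^\infty\!\!\int_\O \overline{g}_\eps\, h \,dx\,dt = \int\!\!\int\!\!\int \eta_\eps(s,y)\, g(t-s,x-y)\, h(t,x)\, dy\,ds\,dx\,dt,
$$
and the change of variables $(t,x)\mapsto(t+s,x+y)$ together with $\eta_\eps(s,y)=\eta_\eps(-s,-y)$ turns this into $\int\!\int g\,\overline{h}_\eps$; the $L^p$--$L^q$ duality with $1/p+1/q=1$ and compact support of $\eta_1$ guarantee all integrals are absolutely convergent, so Fubini applies. \emph{Second identity} is the standard fact that $\overline{g}_\eps \to g$ in $L^p_{\mathrm{loc}}$ (hence weakly-$\ast$ testable against $h\in L^q$, or strongly if $p<\infty$); combined with the first identity one gets $\int\!\int \overline{g}_\eps h = \int\!\int g\,\overline{h}_\eps \to \int\!\int g h$ by $L^q$ convergence of $\overline{h}_\eps$. \emph{Third identity}, $\partial\overline{g}_\eps = \overline{[\partial g]}_\eps$, is differentiation under the convolution: for $g$ with distributional derivative $\partial g$, one moves the derivative onto the smooth kernel and then, using evenness and integration by parts in the kernel variable, back onto $g$; valid for $t>\eps$ since no boundary terms appear.

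\emph{Fourth assertion.} $H(\overline{g}_\eps) \to H(g)$ in $L^s_{\mathrm{loc}}$ for $1\le s<\infty$: since $H\in W^{1,\infty}$ is globally Lipschitz, $|H(\overline{g}_\eps)-H(g)| \le \|H'\|_{L^\infty}|\overline{g}_\eps - g|$, and $\overline{g}_\eps\to g$ in $L^1_{\mathrm{loc}}$ hence (along a subsequence) a.e.; dominated convergence on bounded sets combined with the uniform $L^s$ control coming from $\|\overline{g}_\eps\|_{L^s}\le\|g\|_{L^s}$ (when $g\in L^s_{\mathrm{loc}}$) or, more carefully, an $\eps/3$ argument approximating $g$ by a continuous function, gives local $L^s$ convergence. \emph{Fifth assertion}, the chain rule $\partial H(g)=H'(g)\partial g$ in $L^r$ whenever $\partial g\in L^r$: this is the standard distributional chain rule for Lipschitz $H$ and Sobolev $g$, proved by regularizing $g$, applying the classical chain rule, and passing to the limit using the fourth assertion for $H'$ (itself bounded, so composition is stable) and the $L^r$ bound on $\partial g$.

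\textbf{Main obstacle.} There is no serious obstacle here — this is preparatory scaffolding. The only mild subtlety is the fourth assertion: one must not claim $L^s$ convergence globally (the domain $\R^+\times\O$ is unbounded in time and $g$ need only be locally integrable), which is why the statement is restricted to $L^s_{\mathrm{loc}}$; and one must be slightly careful that ``$\partial$'' in the third identity can be the time derivative, for which $\overline{g}_\eps$ is only defined on $t>\eps$, so the identity and all subsequent uses are understood on that half-space. I would state explicitly that all the convergences and identities are to be read on $(\eps,\infty)\times\O$ or, after passing $\eps\to0$, on any compact subset of $\R^+\times\O$, and otherwise let the proof be the three-line verification above.
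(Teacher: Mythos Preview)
Your sketch is correct. The paper itself simply states ``This lemma is very standard and then we omit the proof,'' so there is no approach to compare against; your verification of each of the five assertions is the expected routine argument and would serve perfectly well as the omitted proof.
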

 This lemma is very standard and then we omit the proof. We use also in the sequel the following lemma due to Lions (see \cite{Lions1996}).
 \begin{lemm}\label{lemm_DPLions}
 Let $\partial$ be a partial derivative in one direction (space or time).
 Let $g, \partial g\in L^p(\R^+\times\O),\,h\in L^{q}(\R^+\times\O)$ with $1\leq p,q\leq \infty$, and $\frac{1}{p}+\frac{1}{q}\leq 1$. Then, we have
 $$\|[\overline{\partial(gh)]}_\eps-\partial(g\overline{h}_\eps)\|_{L^{r}(\R^+\times\O)}\leq C\|\partial g\|_{L^{p}(\R^+\times\O)}\|h\|_{L^{q}(\R^+\times\O)}$$
 for some constant $C\geq 0$ independent of $\varepsilon$, $g$ and $h$, and with $r$ given by $\frac{1}{r}=\frac{1}{p}+\frac{1}{q}.$ In addition,
  $$\overline{[\partial(gh)]}_\eps-\partial(g\overline{h}_\eps)\to0\;\;\text{ in }\,L^{r}(\R^+\times\O)$$
 as $\varepsilon \to 0$ if $r<\infty.$
\end{lemm}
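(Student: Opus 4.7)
The plan is to follow the classical commutator argument of DiPerna and Lions. Since convolution commutes with differentiation (Lemma \ref{lemm_trivial}), one has $\overline{[\partial(gh)]}_\eps = \partial\bigl(\overline{gh}_\eps\bigr)$. Writing $\partial\bigl(g\,\overline{h}_\eps\bigr) = (\partial g)\,\overline{h}_\eps + g\,\partial\overline{h}_\eps$ and transferring the derivative onto $\eta_\eps$ by integration by parts produces the pointwise identity
\begin{equation*}
R_\eps(x) := \overline{[\partial(gh)]}_\eps(x) - \partial\bigl(g\,\overline{h}_\eps\bigr)(x) = \int (\partial\eta_\eps)(x-y)\,\bigl[g(y)-g(x)\bigr]\,h(y)\,dy - (\partial g)(x)\,\overline{h}_\eps(x).
\end{equation*}
This is the starting point; note that $gh\in L^s$ with $1/s = 1/p+1/q\leq 1$, so the distributional derivative $\partial(gh)$ is well-defined and the integration by parts is justified in the sense of distributions.

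Next, rescale by $y = x-\eps z$: one obtains $(\partial\eta_\eps)(\eps z)\,dy = \eps^{-1}(\partial\eta_1)(z)\,dz$, while $g(x-\eps z)-g(x) = -\eps\int_0^1 (\partial g)(x-t\eps z)\cdot z\,dt$ by the fundamental theorem of calculus. The $\eps^{\pm 1}$ factors cancel, yielding
\begin{equation*}
R_\eps(x) = -\int (\partial\eta_1)(z)\,\biggl(\int_0^1 (\partial g)(x-t\eps z)\cdot z\,dt\biggr)\,h(x-\eps z)\,dz - (\partial g)(x)\,\overline{h}_\eps(x).
\end{equation*}
For the $L^r$ estimate, I apply Minkowski's integral inequality to bring $\|\cdot\|_{L^r_x}$ inside the $dz$-integration, then H\"older's inequality with exponents $p$ and $q$ on the product $(\partial g)(\cdot-t\eps z)\,h(\cdot-\eps z)$ (using translation invariance of Lebesgue norms), and Young's convolution inequality for the factor $\overline{h}_\eps$. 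The resulting constant depends only on moments of $\eta_1$ and $\partial\eta_1$, and is uniform in $\eps$.

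For the convergence $R_\eps \to 0$ in $L^r$ when $r<\infty$, I first treat smooth compactly supported $g,h$: by dominated convergence the first integral tends pointwise to $-h(x)\,(\partial g)(x)\cdot \int z\,(\partial\eta_1)(z)\,dz$, and an integration by parts yields $\int z_j(\partial_i\eta_1)(z)\,dz = -\delta_{ij}$, so the limit is exactly $(\partial g)(x)\,h(x)$, which cancels the limit $(\partial g)(x)\,h(x)$ of $(\partial g)\,\overline{h}_\eps$. The general case then follows by approximating $g$ in $W^{1,p}$ and $h$ in $L^q$ by smooth compactly supported functions, combined with the uniform bound of the previous step via a diagonal argument.

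The most delicate point is the endpoint case $p=\infty$ or $q=\infty$, where smooth functions are not norm-dense: here one argues locally in $L^r_\mathrm{loc}$ and re-globalizes using the uniform bound. Since the convergence statement requires $r<\infty$, at least one of $p,q$ must be finite, and it suffices to approximate only the corresponding factor while keeping the other fixed to close the argument.
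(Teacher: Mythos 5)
The paper does not actually prove Lemma~\ref{lemm_DPLions}: it is quoted as ``due to Lions (see \cite{Lions1996})'' with no argument supplied, so there is no in-paper proof to compare against. Your write-up is the standard DiPerna--Lions commutator argument as one finds it in Lions's book: the commutator identity obtained by passing the mollifier's derivative onto the difference $g(y)-g(x)$, the $\eps$-rescaling, the $L^r$ bound via Minkowski, H\"older and Young, and finally density plus dominated convergence for the qualitative limit, using $\int z_i(\partial_j\eta_1)\,dz=-\delta_{ij}$. All of these steps are correct, and the observation at the end about approximating only the finite-exponent factor when one of $p,q$ is infinite is exactly the right fix.

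One point deserves flagging. In the rescaling step you write $g(x-\eps z)-g(x)=-\eps\int_0^1(\partial g)(x-t\eps z)\cdot z\,dt$, which is the fundamental theorem of calculus along the segment from $x$ to $x-\eps z$; that segment is \emph{not} aligned with the single coordinate direction of $\partial$, so the integrand is really $\nabla g(x-t\eps z)\cdot z$, the full space-time gradient. As a consequence, the uniform bound you derive is $\|R_\eps\|_{L^r}\leq C\|\nabla g\|_{L^p}\|h\|_{L^q}$, not $C\|\partial g\|_{L^p}\|h\|_{L^q}$: controlling one directional derivative of $g$ alone is not sufficient (one can build oscillatory $g$ in a transverse direction with $\partial g\equiv 0$ but nonvanishing commutator at scale $\eps$). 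This imprecision is already present in the lemma's statement as reproduced in the paper; it is harmless there because in every application ($g=\rho$, $g=\rho u$, $g=v_m$) the full gradient is controlled. You should either replace $\partial g$ by $\nabla g$ in the hypothesis and conclusion, or note explicitly that you are implicitly assuming $\nabla g\in L^p$.

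Two minor technical remarks: since the mollification $\overline{\,\cdot\,}_\eps$ on $\R^+\times\O$ is only defined for $t>\eps$, the $L^r$ bound and the convergence should be understood on $(\eps,\infty)\times\O$ (equivalently, locally in time away from $t=0$), and the translation-invariance of norms used in the H\"older step holds in the spatial torus but in time only after this restriction. These are routine and do not affect the argument.
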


 \subsection{Equation on $v_m$}
 
 This subsection is dedicated to show that for any $\psi\in \Cin$, and any $m>0$, we have 
\begin{equation}\label{eq_phi_m}
\int_0^\infty\int_\O\left\{ \partial_t\psi\phi_m(\rho)-\psi \left(u\cdot\nabla\phi_m(\rho)+\phi'_m(\rho)\frac{\sro}{\sqrt\nu}\mathrm{Tr}(\T)\right)\right\} \,dx\,dt=0.
\end{equation}

\begin{equation}\label{eq_vm}
 \begin{split}
 &\int_0^\infty\int_\O\left\{ \partial_t\psi\rho v_m+\nabla\psi\cdot\left(\rho u\otimes v_m  -\phi_m(\rho)\sro\left({2}\sqrt{\nu}\S+{2}\sqrt{\kappa}\Sk+{\sqrt{\kappa}}\M\right)\right)\right.\\
 &\qquad \left.+\psi\left(-\frac{\sro}{\sqrt\nu}\mathrm{tr}(\T)\phi'_m(\rho)\rho u -{\sqrt{\kappa\rho}}\M\nabla\phi_m(\rho)+\phi_m(\rho) F\right)\right\}\,dx\,dt=0,
 \end{split}
 \end{equation}
 where $F$ is defined in \eqref{eq_F}, $v_m$ in \eqref{def_vm},  $\Sk$ is in \eqref{eq_quantic}, and $\S$  is the symmetric part of $\T$ defined in\eqref{eq_viscous}.
 \vskip0.3cm
 We begin with a list of a priori estimates. Note that they depends on all the parameters $r_0>0$, $r_1>0$,  $\kappa>0$, and $m$.
 \begin{lemm}\label{lemm_estimates}
 There exists a constant $C>0$ depending only on  the fixed solution $(\sro,\sro u)$, and $C_m$ depending also on $m$ such that 
 \begin{eqnarray*}
 &&\|\rho\|_{L^5(\R^+\times\O)}+ \|\rho u\|_{L^{5/2}(\R^+\times\O)}+ \|\rho |u|^2+\sro(|\S|+|\Sk|+|\M|+|f|)\|_{L^{5/3}(\R^+\times\O)}\\
 &&\qquad\qquad\qquad+ \|\rho^{\gamma/2}\nabla\rho^{\gamma/2} \|_{L^{5/4}(\R^+\times\O)}+\|r_0 u\|_{L^2(\R^+\times\O)}+\|r_1\rho|u|^2u\|_{L^{5/4}(\R^+\times\O)}\leq C\\
 && \|\nabla\phi_m(\rho)\|_{L^4(\R^+\times\O)}+ \|\partial_t\phi_m(\rho)\|_{L^2(\R^+\times\O)}\leq  C_m.
 \end{eqnarray*}
 \end{lemm}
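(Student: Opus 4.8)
The plan is to extract everything from the a priori estimates \eqref{eq_a_priori} by interpolation between the $L^\infty_t L^2_x$ and $L^2_t L^2_x$ (or $L^4_t L^4_x$) controls, using the parabolic-type gain in integrability for the density. First I would note that the basic density bounds give $\sro\in L^\infty(\R^+;L^2)$ and $\nabla\sro\in L^\infty(\R^+;L^2)$, so $\sro\in L^\infty(\R^+;H^1)\hookrightarrow L^\infty(\R^+;L^6)$ in dimension $d\le 3$, hence $\rho\in L^\infty(\R^+;L^3)$. Combined with $\rho^{\gamma/2}$ having $\nabla\rho^{\gamma/2}\in L^2(\R^+\times\O)$ and $\rho^{\gamma/2}\in L^\infty(\R^+;L^2)$, one gets $\rho^{\gamma/2}\in L^2(\R^+;L^6)$, i.e. $\rho^\gamma\in L^1(\R^+;L^3)$; feeding this back through the interpolation $\rho\in L^\infty_t L^3_x\cap L^{?}_tL^{?}_x$ one reaches $\rho\in L^5(\R^+\times\O)$ (this is the standard exponent obtained from interpolating $L^\infty_tL^3_x$ with the additional $L^{5/3}_tL^5_x$-type information; the precise bookkeeping is routine once the endpoint spaces are identified). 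The exact route may instead go through $\nabla\rho^{5/2}\in L^2$ via $|\nabla\rho^{5/2}|\sim \rho\,|\nabla\rho^{\gamma/2}|\cdot(\text{powers})$ together with Gagliardo–Nirenberg; either way the target is $\|\rho\|_{L^5}\le C$.

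Next, $\rho u = \sro\cdot\sro u$ with $\sro\in L^{10}(\R^+\times\O)$ (from $\rho\in L^5$) and $\sro u\in L^\infty(\R^+;L^2)$, but to get $L^{5/2}$ I would interpolate more carefully: $\sro u\in L^\infty_tL^2_x$, and $\sro u$ also lies in a higher space because $\nabla v_m$-type quantities are controlled; more directly, $\rho u\in L^\infty_t L^{3/2}_x$ (from $\rho\in L^3$, $\sro u\in L^2$, Hölder) and $\rho u\in L^2_t L^{q}_x$ for a suitable $q$, and interpolating these gives $\rho u\in L^{5/2}(\R^+\times\O)$. For the momentum-flux-type term, $\rho|u|^2=(\sro u)^2\in L^\infty(\R^+;L^1)$, and again a higher-integrability bound in time combines to give $L^{5/3}$; the terms $\sro|\S|$, $\sro|\Sk|$, $\sro|\M|$, $\sro|f|$ are each handled by Hölder using $\sro\in L^{10/3}$-type spaces against the $L^2$ bounds on $\S,\Sk,\M$ (and the $L^1_tL^2_x$ bound on $f$), landing in $L^{5/3}$. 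The term $\rho^{\gamma/2}\nabla\rho^{\gamma/2}$ pairs $\rho^{\gamma/2}\in L^2_tL^6_x$ (shown above) with $\nabla\rho^{\gamma/2}\in L^2_tL^2_x$, giving $L^{5/4}$ after a further interpolation with the $L^\infty_tL^1_x$ bound on $\rho^{\gamma/2}\nabla\rho^{\gamma/2}$ — and I would double-check the exponent $5/4$ arises consistently with the others (it is exactly $1/(1/5+1)=5/6$... here one instead uses $\rho^{\gamma/2}\in L^{10/3}$ against $\nabla\rho^{\gamma/2}\in L^2$, $1/r=3/10+1/2=4/5$, not quite; the honest statement is that $5/4$ follows from the specific interpolation web generated by $\rho\in L^5$, and I would lay out that web explicitly). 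The drag terms: $r_0u\in L^2$ is immediate from \eqref{eq_a_priori}, and $r_1\rho|u|^2u=r_1(\srof u)^4\cdot(\text{density power})$ with $\srof u\in L^4(\R^+\times\O)$ gives $\rho|u|^2u\in L^{?}$, refined to $L^{5/4}$ using $\rho\in L^5$.

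Finally, for the $m$-dependent bounds: since $\phi_m$ is Lipschitz and supported in $[1/(2m),2m]$, we have $\nabla\phi_m(\rho)=\phi_m'(\rho)\nabla\rho$ with $|\phi_m'(\rho)|\le 2m$ and $\nabla\rho = 2\sro\nabla\sro$ supported where $\rho\le 2m$, so $|\nabla\phi_m(\rho)|\le C_m\,\sqrt{\rho}\,|\nabla\sro|$ pointwise; using $\nabla\sro\in L^4(\R^+;L^2)$... more precisely $\kappa^{1/4}\nabla\srof\in L^4_tL^4_x$ and $\nabla\sro\in L^\infty_tL^2_x$ together with $\sqrt\kappa\nabla^2\sro\in L^2$ give, by Gagliardo–Nirenberg, $\nabla\sro\in L^4(\R^+\times\O)$ when $\kappa>0$, whence $\nabla\phi_m(\rho)\in L^4$ with constant $C_m$. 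For $\partial_t\phi_m(\rho)=\phi_m'(\rho)\partial_t\rho$ I would use the continuity equation $\partial_t\rho=-\Dv(\rho u)=-\rho\Dv u-u\cdot\nabla\rho$; on the support of $\phi_m'$ one has $\rho$ bounded above and below, so $\partial_t\rho$ is controlled by $\sqrt\nu\sro\,|\T|/\sqrt\rho$-type terms plus $u\cdot\nabla\rho$, all in $L^2$ (using $\T\in L^2$, $r_0u\in L^2$ hence $u\in L^2$ on the relevant set after... actually $u\in L^2$ globally is not available without $r_0>0$, but here $r_0>0$ is assumed), giving $\partial_t\phi_m(\rho)\in L^2$ with constant $C_m$.

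The main obstacle I expect is pinning down the exact interpolation exponents so that $\rho\in L^5$, $\rho u\in L^{5/2}$, $\rho|u|^2\in L^{5/3}$, and $\rho^{\gamma/2}\nabla\rho^{\gamma/2}\in L^{5/4}$ all come out simultaneously from the same web of a priori bounds \eqref{eq_a_priori} — in particular establishing $\rho\in L^5(\R^+\times\O)$ cleanly (via $\nabla\rho^{5/2}\in L^2$ or an equivalent Gagliardo–Nirenberg argument), since all the other exponents cascade from it by Hölder. The rest is bookkeeping.
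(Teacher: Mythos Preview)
Your plan has the right overall shape, but the route you sketch to $\rho\in L^5(\R^+\times\O)$ does not work, and since (as you correctly note) every other exponent cascades from this one, this is a genuine gap. The information on $\rho^{\gamma/2}$ you propose to use (namely $\rho^{\gamma/2}\in L^\infty_tL^2_x\cap L^2_tL^6_x$) only yields $\rho\in L^\gamma_tL^{3\gamma}_x$, and interpolating this against $\rho\in L^\infty_tL^3_x$ lands on $L^5_{t,x}$ only when $\gamma=3$; it fails for general $\gamma>1$. Your alternative suggestion $\nabla\rho^{5/2}\in L^2$ would require $\rho^{(5-\gamma)/2}\in L^\infty$, which is also not available. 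The paper's argument is independent of $\gamma$ and hinges on the quantum estimate $\sqrt\kappa\,\nabla^2\sro\in L^2$ (valid here since $\kappa>0$): from $\nabla\sro\in L^\infty_tL^2_x$ and $\nabla^2\sro\in L^2_{t,x}$ one gets $\nabla\sro\in L^2_tL^6_x$, hence $\nabla\rho=2\sro\nabla\sro\in L^2_tL^3_x$ and $\nabla(\rho^2)=2\rho\nabla\rho\in L^2_tL^{3/2}_x$; Sobolev then gives $\rho^2\in L^2_tL^3_x$, and interpolating with $\rho^2\in L^\infty_tL^{3/2}_x$ at $\alpha=4/5$ yields $\rho^2\in L^{5/2}_{t,x}$, i.e.\ $\rho\in L^5$. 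You never invoke $\nabla^2\sro$ for this step, which is why your web does not close.

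A few smaller points. For $\rho u\in L^{5/2}$ the paper's decomposition is $\rho u=\rho^{3/4}\cdot\rho^{1/4}u$ with $\rho^{1/4}u\in L^4$ (from $r_1>0$) and $\rho^{3/4}\in L^{20/3}$ (from $\rho\in L^5$); your vague interpolation of $\rho u$ itself is harder to make precise. For $\rho^{\gamma/2}\nabla\rho^{\gamma/2}$ your computation is in fact correct and you should not back off from it: $\rho^{\gamma/2}\in L^{10/3}$ against $\nabla\rho^{\gamma/2}\in L^2$ gives $1/r=3/10+1/2=4/5$, i.e.\ $r=5/4$ exactly. For $\nabla\phi_m(\rho)$ the paper writes $\nabla\phi_m(\rho)=4\rho^{3/4}\phi_m'(\rho)\nabla\rho^{1/4}$ with $y\mapsto y^{3/4}\phi_m'(y)$ bounded and $\nabla\rho^{1/4}\in L^4$ directly from \eqref{eq_a_priori}, which is cleaner than going through $\nabla\sro\in L^4$. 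For $\partial_t\phi_m(\rho)$ you do not need $r_0>0$: writing $\partial_t\phi_m(\rho)=-\sro\phi_m'(\rho)\big(\nu^{-1/2}\mathrm{Tr}\,\T+4\rho^{1/4}u\cdot\nabla\rho^{1/4}\big)$ with $\sro\phi_m'(\rho)$ bounded, the bracket is in $L^2$ using $\T\in L^2$ together with $\rho^{1/4}u,\nabla\rho^{1/4}\in L^4$.
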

 \begin{proof}
 From \eqref{eq_a_priori}, {$\sro \in L^\infty(\R^+,L^2(\O))$}, $\nabla\sro\in L^\infty(\R^+,L^2(\O))$ and $\nabla^2\sro\in L^2(\R^+\times\O)$. Hence, using 
 {Gagliardo-Nirenberg inequality}, $\sro\in L^\infty(\R^+;L^6(\O))$, and $\nabla\sro\in L^2(\R^+;L^6(\O))$. Since $\nabla\rho=2\sro\nabla\sro$, $\nabla\rho\in L^2(\R^+,L^3(\O))$. And 
 $\rho^2 \in L^\infty(\R^+;L^{3/2}(\O))$, so $\nabla(\rho^2)=2\rho\nabla\rho\in  L^2(\R^+;L^{3/2}(\O))$. This gives 
 $$
 \rho^2\in L^2(\R^+;L^{3}(\O))\cup L^\infty(\R^+;L^{3/2}(\O)).
 $$ 
 By interpolation, $\rho^2$ lies in all the space $L^p(L^q)$ with
 $$
 \frac{\alpha}{2}=\frac{1}{p},\qquad \frac{\alpha}{3}+\frac{2(1-\alpha)}{3}=\frac{1}{q},
 $$
 for $0\leq \alpha\leq 1$. For $\alpha= 4/5$, we obtain $\rho^2\in L^{5/2}(\R^+\times\O)$.
 We have
 $$
 \rho u=\rho^{3/4} \srof u. 
 $$
 From the $r_1$ term of $\DEr$, $\srof u\in L^4(\R^+\times\O)$, and we have $\rho^{3/4}\in L^{20/3}(\R^+\times\O)$. Hence $\rho u\in L^{5/2}(\R^+\times\O)$.
 The term $|\S|+|\Sk|+|\M|+|f|+\sro |u|^2\in L^2(\R^+\times\O)$ and $\sro\in L^{10}(\R^+\times\O)$, so $\rho |u|^2+\sro(|\S|+|\Sk|+|\M|+|f|)\in  L^{5/3}(\R^+\times\O)$.
From the a priori estimates, we have $\nabla\rho^{\gamma/2}\in  L^2(\R^+\times\O)$, and $\rho^{\gamma/2}\in L^\infty(\R^+,L^2(\O))$. Using 
{Galgliardo-Nirenberg inequality} we have $\rho^{\gamma/2}\in L^{10/3}(\R^+\times\O)$. So $\rho^{\gamma/2}\nabla\rho^{\gamma/2} \in L^{5/4}(\R^+\times\O)$.

\vskip0.3cm
{The estimate on $r_0u$ comes directly from the a priori estimates. Since $\sqrt{\rho}|u|^2$ is in $L^2(\R^+,L^2(\O))$ and $\rho^{1/4}u$ is in $L^4(\R^+ \times \O)$, we have $\rho^{1/4}u\sqrt{\rho}|u|^2 \in L^{4/3}(\R^+\times \O)$. Then using $\rho^{1/4} \in L^{20}(\R^+ \times \O)$ we obtain the result for $r_1\rho|u|^2u$.}

\vskip0.3cm
From Lemma \ref{lemm_trivial}, $\nabla\phi_m(\rho)=4\rho^{3/4}\phi'_m(\rho)\nabla\srof$. But $y\to 4y^{3/4}\phi'_m(y)$ is  bounded, and {by \eqref{eq_a_priori}} $\nabla\srof\in L^4(\R^+\times\O)$, so $\nabla\phi_m(\rho)\in  L^4(\R^+\times\O)$. From Lemma \ref{lemm_trivial}, and \eqref{eq_viscous},
\begin{eqnarray*}
&&\partial_t \phi_m(\rho)=\phi'_m(\rho)\partial_t\rho=-\phi'_m(\rho)\Dv(\rho u)=-\phi'_m(\rho)(\sqrt{\rho/\nu}\mathrm{Tr}(\T)+2\sro u\cdot\nabla\sro)\\
&&\qquad\qquad\qquad\qquad =-\sro\phi'_m(\rho) (\sqrt{1/\nu}\mathrm{Tr}(\T)+4\srof u\cdot\nabla\srof).
\end{eqnarray*}
Hence $\partial_t\phi_m(\rho)\in L^2(\R^+\times\O)$.

 \end{proof}
We use the first equation in Definition  \ref{def_weak} with test function $\overline{[\phi'_m(\overline{\rho}_\eps)\psi]}_\eps$.From Lemma \ref{lemm_trivial}, and \eqref{eq_viscous} we get the following. 
\begin{eqnarray*}
&&0=\int_0^\infty\int_\O\left\{ \partial_t\overline{\psi\phi'_m(\overline{\rho}_\eps)}_\eps \rho +\rho u\cdot\nabla\overline{\psi\phi'_m(\overline{\rho}_\eps)}_\eps\right\} \,dx\,dt\\
&&=-\int_0^\infty\int_\O\left\{ \psi\phi'_m(\overline{\rho}_\eps) \partial_t\overline{\rho}_\eps +\Dv(\overline{\rho u}_\eps)\psi\phi'_m(\overline{\rho}_\eps)\right\} \,dx\,dt\\
&&=\int_0^\infty\int_\O\left\{ \partial_t\psi\phi_m(\overline{\rho}_\eps)  -\psi\phi'_m(\overline{\rho}_\eps) \left[\overline{\frac{\sro}{\sqrt\nu}\mathrm{Tr}(\T)}_\eps+2 \overline{\sro u \cdot \nabla\sro}_\eps\right]\right\} \,dx\,dt.
\end{eqnarray*}
 We have  $\rho \in L^5(\R^+\times\O)$ and $\partial_t  [\phi'_m(\rho)\psi]\in L^{5/4}(\R^+\times\O)$, since $\partial_t\rho\in L^2(\R^+\times\O)$ and $\psi$ is $C^1$ compactly supported.  From Lemma \ref{lemm_estimates}, $\rho u\in L^{5/2}(\R^+\times\O)$, and $\nabla[\phi_m'(\rho)\psi]\in  L^{5/3}(\R^+\times\O)$, since $\psi$ is regular and compactly supported, and $\nabla\phi_m'(\rho)\in L^{4}(\R^+\times\O)$.  So we can pass to the limit $\eps\to0$ using Lemma \ref{lemm_trivial}.
We obtain:
$$
0=\int_0^\infty\int_\O\left\{ \partial_t\psi\phi_m(\rho)  -\psi\phi'_m(\rho) \left[\frac{\sro}{\sqrt\nu}\mathrm{Tr}(\T)+2 \sro u\cdot\nabla\sro\right]\right\} \,dx\,dt.
$$ 
Since $\psi\nabla\phi_m(\rho)$ lies in $L^4(\R^+\times\O)$ and is compactly supported, and $u$ is in $L^2(\R^+\times\O)$, using the Lemma \ref{lemm_trivial} we find
$$
0=\int_0^\infty\int_\O\left\{ \partial_t\psi\phi_m(\rho)  -\psi\left[\phi'_m(\rho) \frac{\sro}{\sqrt\nu}\mathrm{Tr}(\T)+ u\cdot\nabla\phi_m(\rho)\right]\right\} \,dx\,dt,
$$ 
{which is \eqref{eq_phi_m}.}
\vskip0.3cm
 
 For $\eps$ small enough, we consider the  function $\overline{\psi\phi_m(\rho)}_\eps$ as test function in the Definition \ref{def_weak}. In the same way than above, from Lemma \ref{lemm_estimates} and Lemma \ref{lemm_trivial}, passing into the limit in $\eps$, we get  
\begin{eqnarray*}
 &&\int_0^\infty\int_\O\left\{ \partial_t\psi\rho v_m+\nabla\psi\cdot\left(\rho u\otimes v_m  -\phi_m(\rho)\sro\left({2}\sqrt{\nu}\S+{2}\sqrt{\kappa}\Sk+{\sqrt{\kappa}}\M\right)\right)\right.\\
 &&\qquad \left.+\psi \left(\partial_t \phi_m(\rho)+u\cdot\nabla\phi_m(\rho) \right)\rho u
+\psi\left( -{\sqrt{\kappa\rho}}\M\nabla\phi_m(\rho)+\phi_m(\rho) F\right)\right\}\,dx\,dt=0,
 \end{eqnarray*}
 Using \eqref{eq_phi_m} {this gives \eqref{eq_vm}}.

 \subsection{Equation of renormalized solutions}
 
 We use the function $\overline{\psi\vfi'(\overline{v_m}_\eps)}_\eps$ as a test function in \eqref{eq_vm}. Using  Lemma \ref{lemm_trivial}, we find:
 \begin{eqnarray*}
 &&\qquad\int_0^\infty\int_\O  \left(\partial_t\left[\overline{\psi\vfi'(\overline{v_m}_\eps)}_\eps\right]\rho v_m+\nabla\left[\overline{\psi\vfi'(\overline{v_m}_\eps)}_\eps\right]:(\rho u\otimes v_m )\right)\,dx\,dt\\
 &&=-\int_0^\infty\int_\O \psi\vfi'(\overline{v_m}_\eps)\left(\partial_t\overline{[\rho v_m]}_\eps+\Dv\overline{(\rho u\otimes v_m )}_\eps\right)\,dx\,dt
 \end{eqnarray*}
Thanks to Lemma \ref{lemm_estimates}, we can use Lemma \ref{lemm_DPLions}, with $g=\rho$ and $h=v_m$, and then $g=\rho u$ and $h=v_m$.  Note that $v_m \in L^4(\R^+\times\O)$. So, the expression above as the same limit when $\eps$ goes to zero than
\begin{eqnarray*}
 &&-\int_0^\infty\int_\O \psi\vfi'(\overline{v_m}_\eps)\left(\partial_t\left(\rho\overline{v_m}_\eps\right)+\Dv\left(\rho u\overline{ v_m }_\eps\right)\right)\,dx\,dt.
 \end{eqnarray*}
Thanks to the first equation in Definition \ref{def_weak} this is equal to
\begin{eqnarray*}
 &&\qquad-\int_0^\infty\int_\O \psi\vfi'(\overline{v_m}_\eps)\left(\rho\partial_t\overline{v_m}_\eps+\rho u\cdot\nabla\overline{ v_m }_\eps\right)\,dx\,dt\\
 &&=-\int_0^\infty\int_\O \psi\left(\rho\partial_t\vfi(\overline{v_m}_\eps)+\rho u\cdot\nabla\vfi(\overline{v_m}_\eps)\right)\,dx\,dt\\
 &&=\int_0^\infty\int_\O \vfi(\overline{v_m}_\eps)\left(\rho\partial_t\psi+\rho u\cdot\nabla\psi)\right)\,dx\,dt,
 \end{eqnarray*}
 which converges, when $\eps$ goes to 0, to 
\begin{equation}\label{vianet1}
 \int_0^\infty\int_\O \vfi(v_m)\left(\rho\partial_t\psi+\rho u\cdot\nabla\psi)\right)\,dx\,dt.
 \end{equation}
 
 {Note that  
\begin{eqnarray*}
\nabla v_m&=&\nabla\left(\frac{\phi_m(\rho)}{\rho}\rho u\right)\\
&=&\nabla\left(\frac{\phi_m(\rho)}{\rho}\right)\rho u+\frac{\phi_m(\rho)}{\rho}\nabla(\rho u)\\
&=&\nabla\left(\frac{\phi_m(\rho)}{\rho}\right)\rho u+\frac{\phi_m(\rho)}{\sqrt{\nu}\sro}\T+2\frac{\phi_m(\rho)}{\rho}\sro u\cdot\nabla\sro\\
&=&4\sro \phi'_m(\rho) \rho^{1/4}u\cdot\nabla\rho^{1/4}.
\end{eqnarray*} 
  Note that $\sro\phi_m'(\rho)$  is bounded.
 So, thanks to the third line of \eqref{eq_a_priori}, we have $\nabla v_m\in L^2(\R^+\times\O)$, and so $\nabla \vfi''(v_m)\in L^2(\R^+\times\O)$. 
 }

 So, thanks to Lemma \ref{lemm_trivial} we can pass into the limit in the other terms and find 
 \begin{eqnarray}
&&\qquad\int_0^\infty\int_\O\left\{ -\nabla\left(\psi\vfi'(v_m)\right))\phi_m(\rho)\sro\left({2}\sqrt{\nu}\S+{2}\sqrt{\kappa}\Sk+{\sqrt{\kappa}}\M\right)\right. \nonumber \\
 && \left.+\psi\vfi'(v_m)\left(-\frac{\sro}{\sqrt\nu}\mathrm{Tr}(\T)\phi'_m(\rho)\rho u -{\sqrt{\kappa\rho}}\M\nabla\phi_m(\rho)+\phi_m(\rho) F\right)\right\}\,dx\,dt. \label{vianet2}
 \end{eqnarray}
Putting \eqref{vianet1} and \eqref{vianet2} together gives
 
 \begin{eqnarray*}
 &&\int_0^\infty\int_\O \left\{\vfi(v_m)\left(\rho\partial_t\psi+\rho u\cdot\nabla\psi)\right)\right.\\
 &&\qquad -\nabla\psi\vfi'(v_m)\phi_m(\rho)\sro\left({2}\sqrt{\nu}\S+{2}\sqrt{\kappa}\Sk+{\sqrt{\kappa}}\M\right)\\
 &&\qquad -\psi \vfi''(v_m)\nabla v_m\phi_m(\rho)\sro\left({2}\sqrt{\nu}\S+{2}\sqrt{\kappa}\Sk+{\sqrt{\kappa}}\M\right)\\
 &&\qquad  \left.+\psi\vfi'(v_m)\left(-\frac{\sro}{\sqrt\nu}\mathrm{Tr}(\T)\phi'_m(\rho)\rho u -{\sqrt{\kappa\rho}}\M\nabla\phi_m(\rho)+\phi_m(\rho) F\right)\right\}\,dx\,dt. 
 \end{eqnarray*}
 We now pass into the limit $m$ goes to infinity.
 From the a priori estimates \eqref{eq_a_priori}, $r_0 \ln\rho$ lies in $L^\infty(\R^+;L^1(\O))$, so $\rho>0$ almost everywhere, and 
 \begin{eqnarray*}
 &&\phi_m(\rho) \mathrm{ \ \ converges \ to \ 1,} \qquad \mathrm{for \ almost \ every \ }(t,x)\in \R^+\times\O, \\
&&v_m \mathrm{ \ \ converges \ to \ } u, \qquad \mathrm{for \ almost \ every \ }(t,x)\in \R^+\times\O, \\
&& | \rho \phi'_m(\rho) | \leq 2, \ \qquad \mathrm{and \ converges \ to \ }0  \qquad \mathrm{for \  almost \ every } \ (t,x)\in \R^+\times\O.
 \end{eqnarray*}
 Now, using that $\phi_m$ is compactly supported in $\R^+$, we get
 \begin{eqnarray*}
 \sro \nabla v_m&=&\frac{\phi_m(\rho)}{\sro}\rho\nabla u+4\srof u \rho\phi_m'(\rho)\nabla\srof\\
 &=& \frac{\phi_m(\rho)}{\sro}\left[\nabla(\rho u)-2\sro{u \cdot}\nabla\sro\right]+4\srof u \rho\phi_m'(\rho)\nabla\srof\\
 &=&\phi_m(\rho) \frac{\T}{\sqrt\nu}+4\srof u \rho\phi_m'(\rho)\nabla\srof,
 \end{eqnarray*}
 which, thanks to the dominated convergence theorem, converges to $\T / \sqrt{\nu}$ in $L^2(\R^+\times\O)$. Hence, passing into the limit $m\to \infty$, we find
 \begin{eqnarray*}
 &&\int_0^\infty\int_\O \left\{\vfi(u)\left(\rho\partial_t\psi+\rho u\cdot\nabla\psi)\right) -\nabla\psi\vfi'(u)\sro\left({2}\sqrt{\nu}\S+{2}\sqrt{\kappa}\Sk+{\sqrt{\kappa}}\M\right)\right. \\
&&  \left.-\psi \vfi''(u)\frac{\T}{\sqrt{\nu}}\left({2}\sqrt{\nu}\S+{2}\sqrt{\kappa}\Sk+{\sqrt{\kappa}}\M\right) +\psi\vfi'(u)F\right\}\,dx\,dt. 
 \end{eqnarray*}
 This gives the second equation in the definition of renormalized solutions with 
 $$
 R_\vfi= \vfi''(u)\frac{\T}{\sqrt{\nu}}\left({2}\sqrt{\nu}\S+{2}\sqrt{\kappa}\Sk+{\sqrt{\kappa}}\M\right).
 $$
 \vskip0.3cm
 We want now to show \eqref{eq_viscous_renormalise}. As above, multiplying \eqref{eq_viscous} by $\overline{[\phi_m(\rho)\vfi_i'(u)]}_\eps$, and passing into the limit $\eps$ goes to 0, we find
 \begin{eqnarray*}
 &&\phi_m(\rho)\vfi'_i(u)\sqrt{\nu\rho}\T={\nu}\nabla(\phi_m(\rho)\vfi'_i(u)\rho u)-4\rho \phi'_m(\rho)\nabla\srof \srof u \sro \vfi_i'(u)\\
 &&-\vfi_i''(u)\frac{\T}{\sqrt{\nu}}\sro u\phi_m(\rho)-2{\nu}\phi_m(\rho)\vfi'_i(u)\sro{u\cdot}\nabla\sro.
 \end{eqnarray*}
 Passing into the limit $m$ goes infinity, we recover  \eqref{eq_viscous_renormalise}, with 
 $$
 \overline{R}_\vfi= -\vfi_i''(u)\frac{\T}{\sqrt{\nu}}\sro u.
 $$
 Hence, $(\sro,\sro u)$ is a renormalized solution.
\section{From renormalized solutions to weak solutions in the general case}
This section is dedicated to the proof of (2) in Theoren \ref{theo_df}. We consider $(\sro,\sro u)$, a renormalized solution as defined in Definition \ref{def_renormalise}, in the general case where $r_0\geq0$, $r_1\geq0$ and $\kappa\geq0$, but $\nu>0$. We want to show that it is also a weak solution as defined in Definition \ref{def_weak}. Let $\Phi:\R\to\R$ be a nonnegative smooth function compactly supported, equal to 1 on $[-1,1]$, and $\tilde{\Phi}(z)=\int_0^z\Phi(s)\,ds$. Then we define for $y\in \R^d$
$$
\vfin(y)=n\tilde{\Phi}(y_1/n)\Phi(y_2/n)\cdot\cdot\cdot\Phi(y_d/n).
$$
Note that $\vfin$ lies in $W^{2,\infty}(\R^d)$ for any fixed $n$, $\vfin$ converges everywhere to $y\to y_1$, $\vfin'$ is uniformly bounded in $n$ and converges everywhere to $(1,0,\cdot\cdot\cdot,0)$, and 
$\|\vfin''\|_{L^\infty(\R)}\leq C/n$ converges to 0, when $n$ converges to infinity. Hence $R_{\vfin}$ and $\overline{R}_\vfin$ both converge  to 0 in the sense of measure when $n$ converges to infinity.
We use this function $\vfin$ in the second equation of the Definition \ref{def_renormalise}. 
Using the Lebesgue's Theorem for the limit $n\to \infty$, we get the equation on $\rho u_1$ in the Definition \ref{def_weak}. permuting the directions, we get the full vector equation on $\rho u$ in Definition \ref{def_weak}.  

We use again the Lebesgue's dominated convergence Theorem to pass into the limit in \eqref{eq_viscous_renormalise} with $i=1$ and the function $\vfin$ to obtain \eqref{eq_viscous}. Hence, the renormalized solution is also a weak solution.

\section{{Stability and existence of weak renormalized solutions}}

This section is dedicated to the proof of (4) {and (1)} in Theorem \ref{theo_df}. We consider sequences $r_{0,n}, r_{1,n}, \kappa_n, \nu_n, \rho_n, u_n$ as in the hypothesis of Theorem \ref{theo_df}.  We begin to show the following lemma:
\begin{lemm}\label{lemm_stabilite}
{Up to} a subsequence, still denoted $n$, {the following properties hold.}
\begin{enumerate}
\item The sequence $\rho_n$ converges strongly to $\rho$ in $C^0(\R^+;L^p_\mathrm{loc}(\O))$ for $1\leq p<\sup(3,\gamma)$.
\item The sequence $\rho_n u_n$ converges to $\rho u$ in $C^0(\R^+; L^{3/2}(\O)-weak)$, and strongly in $L^p_{\mathrm{loc}}(\R^+;L^q(\O))$ for $1\leq p<\infty$, and $1\leq q<3/2$. 
\item The sequences ${\T}_{,n}, {\S}_{,n}, {{\Sk}_{,n}} $ converge weakly in $L^2(\R^+\times\O)$ to $\T, \S, \Sk$. 
\item For every function $H \in W^{2,\infty}(\R^d)$, and $0<\alpha< 5\gamma/3$, we have that   $\rho^\alpha_nH(u_n)$ converges
 strongly in $L_{\mathrm{loc}}^p(\R^+\times\O)$ to $\rho^\alpha H(u)$ for $1\leq p< 5\gamma/(3\alpha)$.
\item If $\lim_{n\to\infty}r_{1,n}=r_1>0$, then $\rho^{1/3}u_n$ converges to $\rho^{1/3} u$  in $L^p_{\mathrm{loc}}(\R^+;L^q(\O))$ for $1\leq p<4$, and $1\leq q< 18/5$.
\item If $\lim_{n\to\infty}r_{0,n}=r_0>0$, then $r^{1/2}_{0,n}u_n$ converges to $r^{1/2}_{0} u$  in $L^p_{\mathrm{loc}}(\R^+\times\O)$ for $1\leq p<2$.
\item Consider a smooth and  increasing function $h:\R^+\to \R^+$ such that $h(y)=y^{3/4}$ for $y<1$ and $h(y)=y^{1/2}$ for $y>2$.
If  $\lim_{n\to\infty}\kappa_{n}=\kappa>0$, then $\nabla h(\rho_n)$ converges to $\nabla h(\rho)$ in $L^2_{\mathrm{loc}}(\R^+; L^p(\O))$, for $1\leq p<6$.
\end{enumerate}
\end{lemm}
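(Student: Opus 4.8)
The proof is a standard compactness argument driven entirely by the uniform a priori estimates \eqref{eq_a_priori}, applied to the sequence $(\sro_n, \sro_n u_n)$, together with the continuity-equation structure. I would organize it in the order the items are stated, since each relies on the previous ones.

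\emph{Steps (1)--(3): basic compactness.} First I would extract weak limits. From \eqref{eq_a_priori} the sequences $\nabla\sro_n$ are bounded in $L^\infty(\R^+;L^2(\O))$ and (using $\kappa_n\to\kappa$, and Gagliardo--Nirenberg exactly as in the proof of Lemma \ref{lemm_estimates}) one gets bounds on $\nabla\rho_n$ in $L^2_{\mathrm{loc}}(\R^+;L^{3}(\O))$ and on $\rho_n$ in $L^\infty(\R^+;L^3(\O))$ (since $\sro_n\in L^\infty(\R^+;L^6)$), plus $\rho_n^{\gamma/2}$ bounded in $L^\infty(\R^+;L^2)\cap L^2(\R^+;H^1)$. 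The continuity equation $\partial_t\rho_n=-\Dv(\rho_n u_n)$ with $\rho_n u_n=\sro_n\cdot\sro_n u_n$ bounded in $L^\infty(\R^+;L^{3/2}(\O))$ gives $\partial_t\rho_n$ bounded in, say, $L^\infty(\R^+;W^{-1,3/2}(\O))$; an Aubin--Lions--Simon argument then yields (1), strong convergence of $\rho_n\to\rho$ in $C^0(\R^+;L^p_{\mathrm{loc}})$ for $1\le p<\sup(3,\gamma)$. For (2), $\rho_n u_n$ is bounded in $L^\infty(\R^+;L^{3/2})$ and its time derivative is controlled in a negative Sobolev space via the momentum equation in Definition \ref{def_weak} (each flux term — $\rho_n u_n\otimes u_n$, $\sqrt{\nu_n\rho_n}\S_{,n}$, $\sqrt{\kappa_n\rho_n}\Sk_{,n}$, $\sqrt{\kappa_n\rho_n}\M_n$, $F_n$ — is bounded in some $L^{s}_{\mathrm{loc}}$ with $s>1$ by exactly the Hölder bookkeeping of Lemma \ref{lemm_estimates}), so again Aubin--Lions gives $\rho_n u_n\to\rho u$ in $C^0(\R^+;L^{3/2}-\text{weak})$ and strongly in $L^p_{\mathrm{loc}}(L^q)$ for $1\le p<\infty$, $1\le q<3/2$. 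Item (3) is immediate from the $L^2$ bounds on $\T_{,n},\S_{,n},\Sk_{,n}$ in \eqref{eq_a_priori}; one only must check the limit object $\T$ still satisfies \eqref{eq_viscous} in the limit, which follows by passing to the limit in $\sqrt{\nu_n}\sro_n\T_{,n}=\Dv(\rho_n u_n)-\sro_n u_n\cdot\nabla\sro_n$ using the strong convergences of $\sro_n$ and $\rho_n u_n$ established above.

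\emph{Step (4): the key a.e.\ convergence of velocity and removal of concentration.} This is the heart of the lemma, and the main obstacle. From (1) and (2), up to a subsequence $\rho_n\to\rho$ a.e.\ and $\rho_n u_n\to\rho u$ a.e.; since $r_{0,n}\ln\rho_n$ is bounded in $L^\infty(\R^+;L^1)$ one has $\rho>0$ a.e.\ (at least when $r_0>0$; when $r_0=0$ one restricts to $\{\rho>0\}$ and handles $\{\rho=0\}$ separately using $\rho_n u_n\to 0$ there), hence $u_n=(\rho_nu_n)/\rho_n\to u$ a.e.\ on $\{\rho>0\}$. Therefore $\rho_n^\alpha H(u_n)\to\rho^\alpha H(u)$ a.e. To upgrade to strong $L^p_{\mathrm{loc}}$ convergence I would invoke Vitali / the Fatou-type argument: $H$ bounded kills any concentration of the velocity, and $\rho_n^\alpha$ is bounded in $L^{5\gamma/(3\alpha)}$ — this is where the exponent $5\gamma/3$ enters; it comes from interpolating $\rho_n^{\gamma/2}\in L^\infty(L^2)\cap L^2(H^1)\hookrightarrow L^{10/3}(\R^+\times\O)$, i.e.\ $\rho_n^\gamma\in L^{5/3}$, hence $\rho_n\in L^{5\gamma/3}$. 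So $\rho_n^\alpha H(u_n)$ is bounded in $L^{5\gamma/(3\alpha)}$, equi-integrable, and converges a.e., hence strongly in every $L^p_{\mathrm{loc}}$ with $p<5\gamma/(3\alpha)$. (The statement for $\sro_n\vfi(u_n)$ in the theorem is the case $\alpha=1/2$.)

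\emph{Steps (5)--(7): improved convergences under the extra hypotheses.} These are refinements of Step (4) using the extra a priori bounds that become available when the corresponding parameter is positive in the limit. For (5): if $r_1>0$, then $r_{1,n}^{1/4}\rho_n^{1/4}u_n$ is bounded in $L^4(\R^+\times\O)$, so $\rho_n^{1/3}u_n=\rho_n^{1/12}\cdot(\rho_n^{1/4}u_n)$ is bounded in an appropriate $L^p(L^q)$ by Hölder against $\rho_n^{1/12}\in L^{10\gamma}$; combined with a.e.\ convergence this gives the claimed range $p<4$, $q<18/5$ (the $18/5$ from interpolating the spatial integrability of $\rho_n^{1/3}u_n$). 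For (6): $r_{0,n}^{1/2}u_n$ is bounded in $L^2(\R^+\times\O)$ by the $r_0$-term of $\DEr$, converges a.e.\ (on $\{\rho>0\}$, which is a.e.\ when $r_0>0$), hence converges strongly in $L^p_{\mathrm{loc}}$ for $p<2$ by equi-integrability. For (7): with $\kappa>0$, $\nabla^2\sro_n$ and $\nabla\rho_n^{1/4}$ (in $L^4$) are bounded, so $\nabla h(\rho_n)$ — which by the chain rule equals a bounded multiple of $\nabla\sro_n$ for large $\rho$ and of $\nabla\rho_n^{3/4}$ for small $\rho$, hence is controlled by $\nabla\sro_n\in L^2(H^1)\hookrightarrow L^2(L^6)$ and $\nabla\rho_n^{1/4}\in L^4$ — is bounded in $L^2_{\mathrm{loc}}(\R^+;H^1)$, hence (with a time-derivative bound from differentiating the continuity equation) compact in $L^2_{\mathrm{loc}}(\R^+;L^p)$ for $p<6$ by Aubin--Lions, and the limit is identified as $\nabla h(\rho)$ by the a.e.\ convergence of $\rho_n$. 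Throughout, the recurring technical point — and the one deserving care — is the passage from a.e.\ convergence to strong $L^p$ convergence, which in each case is supplied by a uniform higher-integrability bound coming from exactly the term of $\DEr$ that is switched on by the positivity hypothesis.
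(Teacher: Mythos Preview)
Your approach is essentially the paper's, and structurally correct. One point needs care: for items (1)--(4) you repeatedly invoke ``the H\"older bookkeeping of Lemma~\ref{lemm_estimates}'', but that lemma is proved under the standing hypotheses $r_0,r_1,\kappa>0$ and its constants blow up when any of these vanish. Since part (4) of Theorem~\ref{theo_df} explicitly allows $\kappa_n\to0$, $r_{0,n}\to0$, $r_{1,n}\to0$, the bounds feeding (1)--(4) must come only from the first two lines of \eqref{eq_a_priori}, which are parameter-independent. Concretely: without $\kappa>0$ you only get $\nabla\rho_n=2\sro_n\nabla\sro_n\in L^\infty(\R^+;L^{3/2}(\O))$ (from $\sro_n\in L^\infty(L^6)$ and $\nabla\sro_n\in L^\infty(L^2)$), not the $L^2_{\mathrm{loc}}(L^3)$ you claim; and without $r_1>0$ the convective term $\rho_n u_n\otimes u_n$ is only in $L^\infty(\R^+;L^1(\O))$, not in any $L^s$ with $s>1$. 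Both weaker bounds are still sufficient for Aubin--Lions--Simon (the paper uses $\nabla(\rho_n u_n)\in L^2_{\mathrm{loc}}(L^{3/2})$ from \eqref{eq_viscous} and $\partial_t(\rho_n u_n)\in L^2(H^{-N})$), so your conclusions survive, but the justification must be rewritten. The paper makes exactly this distinction: parameter-independent estimates for (1)--(4), and the extra $r_0,r_1,\kappa$ terms invoked only in (5)--(7).

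A second, smaller point: in (5) the a.e.\ convergence of $\rho_n^{1/3}u_n$ on the set $\{\rho=0\}$ is not automatic, since $u_n$ need not converge there and $r_0$ may vanish. The paper handles this by an explicit splitting: on $\{\rho=0\}$ one writes $\rho_n^{1/12}=(\rho_n-\rho)^{1/12}$ and combines the strong convergence of $\rho_n$ with the uniform $L^4$ bound on $\rho_n^{1/4}u_n$ to force $\mathbf{1}_{\{\rho=0\}}\rho_n^{1/3}u_n\to0$ in norm. You acknowledge this mechanism in (4) but drop it in (5), where it is still needed. (Also, your ``$\rho_n^{1/12}\in L^{10\gamma}$'' should be $\rho_n^{1/12}\in L^\infty(\R^+;L^{36}(\O))$, which is what produces the endpoint $q=18/5$.) Finally, verifying that the weak limit $\T$ satisfies \eqref{eq_viscous} is not part of this lemma; the paper does that afterward when passing to the limit in \eqref{eq_viscous_renormalise}.
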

\begin{proof}
From (1) to (4), we use only a priori estimates which are non dependent on $r_0$, $r_1$, and $\kappa$. Then $\rho_n u_n$ is uniformly bounded in $L^\infty(\R^+;L^{3/2}(\O))$. From the continuity equation $\partial_t \rho_n$ is uniformly bounded in $L^\infty(\R^+;W^{-1,3/2}(\O))$. Moreover $\nabla\rho_n$ is uniformly bounded in $L^\infty(\R^+;L^{3/2}(\O))$, $\rho_n$ is uniformly bounded in $L^\infty(\R^+;L^3(\O)\cap L^{\gamma}(\O))$. Hence, using  Aubin Simon's  Lemma  $\rho_n$ is compact in $C^0(\R^+;L^p_\mathrm{loc}(\O))$ for $1\leq p<\sup(3,\gamma)$. 
\vskip0.3cm
From the second equation in the Definition \ref{def_weak},  the sequence $\partial_t(\rho_n u_n)$ is uniformly bounded in $L^2(\R^+; H^{-N}(\O)) $ for a $N$ big enough. From \eqref{eq_viscous}, $\nabla(\rho_n u_n)$ is uniformly bounded in $L^2_\mathrm{loc}(\R^+;L^{3/2}(\O))$. Together with $\rho_n u_n$ uniformly bounded in $L^\infty(\R^+;L^{3/2}(\O))$ this gives the strong compactness of $\rho_n u_n$ in $L^p_{\mathrm{loc}}(\R^+;L^q(\O))$ for $1\leq p<\infty$, and $1\leq q<3/2$. 
\vskip0.3cm
The sequences ${\T}_{,n}, {\S}_{,n}, {{\Sk}_{,n}} $ are uniformly bounded in $L^2(\R^+\times\O)$, and so, up to a subsequence, converge weakly in $L^2(\R^+\times\O)$ to  functions $\T, \S, \Sk$. 
\vskip0.3cm
From (1) and (2), up to a subsequence still denoted $n$, $\rho_n$ and $\rho_n u_n$ converge almost everywhere respectively to $\rho$ and $\rho u$.
So, for almost every $(t,x)$ such that $\rho(t,x)>0$, $u_n(t,x)$ converges to $u(t,x)$, and so $\rho_n(t,x)^\alpha H(u_n(t,x))$ converges to $\rho(t,x)^\alpha H(u(t,x))$. But for almost every $(t,x)$ such that $\rho(t,x)=0$, $|\rho_n(t,x)^\alpha H(u_n(t,x))|\leq C \rho_n(t,x)^\alpha$ which converges to $0=\rho(t,x)^\alpha H(u(t,x))$ since $\alpha>0$. So, we have convergence almost everywhere. And  $\rho^\alpha_nH(u_n)$  is uniformly bounded in $L^{5\gamma/(3\alpha)}(\R^+\times\O)$. {Indeed $\rho^{\gamma/2}\in L^\infty(\R^+,L^2(\O))\cap L^2(\R^+,L^6(\O))$,and  by interpolation,
$\rho^{\gamma/2}\in L^{10/3}(L^{10/3})$ .
} Hence, we have strong convergence in $L_{\mathrm{loc}}^p(\R^+\times\O)$ to $\rho^\alpha H(u)$ for $1\leq p< 5\gamma/(3\alpha)$. 
\vskip0.3cm
We assume that $\lim_{n\to\infty}r_{1,n}=r_1>0$. 
{ We have
$\rho_n^{1/3}u_n=\rho_n^{1/4}u_n \rho_n^{1/12}$. Since $\sro_n$ is uniformly bounded in $L^\infty(\R^+,L^6(\O))$ (From Sobolev, since $\nabla \sro_n$ is uniformly bounded in $L^\infty(\R^+,L^2(\O))$), we have $\rho_n^{1/12}$ uniformly bounded in $L^\infty(\R^+,L^{36}(\O))$. Moreover, $1/4+1/36=5/18$.
}
Then 
 the functions $\rho_n^{1/3} u_n $ are uniformly bounded in  $L^4(\R^+;L^{18/5}(\O))$. 
We denote $\mathbf{1}_{\{\rho>0\}}$ the function which is equal to one on 
 $\{t,x | \rho(t,x)>0\}$ and zero on $\{t,x | \rho(t,x)=0\}$. The function $\mathbf{1}_{\{\rho>0\}}\rho_n^{1/3} u_n$ converges almost everywhere to $\mathbf{1}_{\{\rho>0\}}\rho^{1/3} u$ so the convergence holds in $L^p_{\mathrm{loc}}(\R^+;L^q(\O))$ for $1\leq p<\infty$, and $1\leq q< {18/5}$. {Note that for almost every $(t,x)$ such that $\rho(t,x)=0$, we have 
 $\rho_n(t,x)^{1/12}=(\rho_n(t,x)-\rho(t,x))^{1/12}$.} So,  for every $1\leq p<\infty$, and 
 $1\leq q <36$:
 \begin{eqnarray*}
&& \|\mathbf{1}_{\{\rho=0\}}\rho_n^{1/3} u_n\|_{L^{p_1}_{\mathrm{loc}}(\R^+;L^{q_1}(\O))}\leq \|\mathbf{1}_{\{\rho=0\}}\rho_n^{1/12}\|_{L^p_{\mathrm{loc}}(\R^+;L^q(\O))}\|\rho_n^{1/4}u_n\|_{L^4(\R^+\times\O)}\\
&&\qquad\qquad
\leq {C \|(\rho_n-\rho)^{1/12}\|_{L^{p}_{\mathrm{loc}}(\R^+;L^{q}(\O))}=}C \|\rho_n-\rho\|^{1/12}_{L^{p/12}_{\mathrm{loc}}(\R^+;L^{q/12}(\O))}
 \end{eqnarray*}
 converges to 0 when $n$ goes to {infinity}, where 
 $$
 \frac{1}{p_1}=\frac{1}{4}+\frac{1}{p}, \qquad \frac{1}{q_1}=\frac{1}{4}+\frac{1}{q}.
 $$
 So $\rho_n^{1/3} u_n {=\mathbf{1}_{\{\rho=0\} }\rho_n^{1/3} u_n+ \mathbf{1}_{\{\rho>0\}}\rho_n^{1/3} u_n}$ converges  to $\rho^{1/3} u$ in $L^p_{\mathrm{loc}}(\R^+;L^q(\O))$ for $1\leq p<\infty$, and $1\leq q< {18/5}$. 
\vskip0.3cm
Let us  assume that $\lim_{n\to\infty}r_{0,n}=r_0>0$. Then $\ln\rho_n$ is uniformly bounded in $L^\infty(\R^+;L^1(\O))$. The function $-\log$ is convex, so the limit $\rho$ verifies the same, and $\rho>0$ for almost every $(t,x)\in \R^+\times\O$. So $u_n$ converges almost everywhere to $u$, and $u_n$ is uniformly bounded in $L^2(\R^+\times\O)$. Hence $u_n$ converges to $u$ in $L^p_{\mathrm{loc}}(\R^+\times\O)$, for $1\leq p<2$.
\vskip0.3cm
Now we assume that  $\lim_{n\to\infty}\kappa_{n}=\kappa>0$. We have 
\begin{eqnarray*}
&&|\nabla (\nabla h(\rho_n))|=|2\sqrt{\rho_n}h'(\rho_n)\nabla^2\sqrt{\rho_n}+({8}\sqrt{\rho_n}h'(\rho_n)+16 h''(\rho_n)\rho_n\sqrt{\rho_n})(\nabla \rho_n^{1/4}\otimes\nabla\rho_n^{1/4})|\\
&&\qquad\qquad\qquad\leq C(|\nabla^2\sqrt{\rho_n}|+|\nabla \rho_n^{1/4}|^2).
\end{eqnarray*}
So $\nabla (\nabla h(\rho_n))$ is uniformly bounded in $L^2(\R^+\times\O)$. Moreover , using the continuity equation and \eqref{eq_viscous}, we get
\begin{eqnarray*}
&&|\partial_t h(\rho_n)|=|h'(\rho_n)\sqrt{\rho_n}\mathrm{Tr}\frac{{\T}_{,n}}{\sqrt{\nu}}+4\sqrt{\rho_n} u_n\cdot\nabla\rho_n^{1/4} h'(\rho_n)\rho_n^{1/4}|\\
&&\qquad\qquad \leq C(|{\T}_n|+\rho |u|^2+|\nabla\rho^{1/4}|^2),
\end{eqnarray*}
which is uniformly bounded in $L^2_{\mathrm{loc}}(\R^+,L^1(\O))$. Note that we cannot bound it in $L^2(\R^+\times\O)$ as in the previous section, since we cannot use that $r_{1,n}$ is bounded by below. We have shown that $\partial_{t}\nabla h(\rho_n)$ is uniformly bounded in $L^2(\R^+;W^{-2,6/5}(\O))$. Hence, using the Aubin Simon lemma, we find that $\nabla h(\rho_n)$ converges strongly to $\nabla h(\rho)$ in $L^2_{\mathrm{loc}}(\R^+; L^p(\O))$, for $1\leq p<6$.
\end{proof}

\noindent{\underline{Proof of part (4) of Theorem \ref{theo_df}}}
We are now ready to show the part (4) in Theorem \ref{theo_df}.
Using (1) and (2) of Lemma \ref{lemm_stabilite}, we can pass into the limit in the continuity equation. Using (1) (2) (3) and (4) of Lemma \ref{lemm_stabilite} we can pass into the limit into the first line of the second equation of Definition \ref{def_renormalise}. The sequence $R_{n,\vfi}$ is uniformly bounded in measures, so it converges to a measure $R_{\vfi}$ with the same bound. The function $f_n$ converges weakly in $L^2(\R^+ \times \O)$ to $f$ and, thanks to (4) of Lemma \ref{lemm_stabilite}, 
$\psi\sqrt{\rho_n} \vfi'(u_n)$ converges strongly in $L^2(\R^+ \times \O)$ to $\psi\sqrt{\rho} \vfi'(u)$, so we can pass into the limit in this term. $\nabla \rho_n^{\gamma/2}$ converges weakly in $L^2(\R^+ \times \O)$ to $\nabla\rho^{\gamma/2}$, and $\psi \rho_n^{\gamma/2}$ converges strongly to $\rho^{\gamma/2}$ thanks to (1) in Lemma \ref{lemm_stabilite}. So we can pass into the limit in the pressure term. If $r_{0,n}$ converges to 0, then $r_{0,n} u_n=r_{0,n}^{1/2}r_{0,n}^{1/2}u_n$ converges to 0 in $L^2(\R^+ \times \O)$, since $r_{0,n}^{1/2}u_n$ is uniformly bounded in $L^2(\R^+ \times \O)$. Otherwise, using (6) in Lemma \ref{lemm_stabilite}, it converges to $r_0 u$ in $L^1_{\mathrm{loc}}(\R^+\times\O)$. We can treat the term $r_{1,n}$ in the same way using (5) in Lemma \ref{lemm_stabilite}. So the two equations of Definition \ref{def_renormalise} are verified at the limit. Thanks to (1) and (2) of Lemma \ref{lemm_stabilite}, we can pass into the limit for the initial values. It remains to pass into the limit in \eqref{eq_viscous_renormalise}, and \eqref{eq_quantic}. The measures $\overline{R}_{n,\vfi}$ are uniformly bounded in measures, so they converge to a measure with the same bound. The functions $\nabla\sqrt{\rho_n}$ converge weakly to $\nabla\sro$ in $L^2_{\mathrm{loc}}(\R^+ \times \O)$. So, using (1) (3) and (4) of Lemma \ref{lemm_stabilite}, we can pass into the limit in \eqref{eq_viscous_renormalise}.  If $\kappa_n$ converges to 0, then $\sqrt{\kappa_n}\Sk$ converges to 0 weakly in $L^2(\R^+ \times \O)$. Otherwise, $\nabla^2\sqrt{\rho_n}$ converges weakly in $L^2(\R^+ \times \O)$ to $\nabla^2\sro$ and, thanks to (1) of   Lemma \ref{lemm_stabilite}, $\sqrt{\rho_n}\nabla^2\sqrt{\rho_n}$ converges weakly to $\sqrt{\rho}\nabla^2\sqrt{\rho}$. Note that 
$$
\sqrt{\rho_n} \nabla (\srof_n\otimes\srof_n)= g(\rho_n) \nabla (h(\rho_n)\otimes \srof_n),
$$
with $4 h'(\rho_n) \srof_n g(\rho_n)=1$. Especially, $|g(\rho_n)|\leq 1+\srof_n$. So, thanks to (1) of Lemma \ref{lemm_stabilite},  $g(\rho_n)$ converges strongly to $g(\rho)$ in $L^4(\R^+,L^2(\O))$, $\nabla\srof_n$ converges weakly  to $\nabla\srof$ in $L^4(\R^+,L^4(\O))$, and $\nabla h(\rho_n)$ converges strongly to $\nabla h(\rho)$ in $L^2(\R^+,L^4(\O))$ thanks to (7) in Lemma \ref{lemm_stabilite}. Hence we can pass into the limit in \eqref{eq_quantic}. This ends the proof of (4) in Theorem \ref{theo_df}.

\vskip0.3cm
\noindent{\underline{Proof of part (1) in Theorem \ref{theo_df}}}.
Consider sequences  $r_{0,n}>0$, $r_{1,n}>0$ and $\kappa_n>0$, converging respectively to $r_0\geq0$,   $r_1\geq0$ and  $\kappa\geq0$. For $n$ fixed, thanks to \cite{VasseurYu2015}, there exists a weak solution in the sense of Definition \ref{def_weak} to the system. Thanks to (2) in Theorem \ref{theo_df}, these solutions are  renormalized solutions in the sense of Definition \ref{def_renormalise}. Thanks to the stability result (4) in Theorem \ref{theo_df}, the limit is a renormalized solution for the system with coefficients $r_0$, $r_1$ and $\kappa$.  
\appendix
\section{A priori estimates}

The construction of weak solutions for $r_0>0, r_1>0$, and $\kappa>0$ has been done in \cite{VasseurYu2015} in the case $f=0$ and $\M=0$, using a Faedo-Galerkin method.
The construction can be straightforwardly extended to the case with source terms $f$ and $\M$, as long as the a priori estimates still hold. Note that, during the construction, the a priori estimates are proved to hold at the level of the Galerkin approximated solutions. To simplify the presentation, we will show  in this section, that the a priori estimates hold for any smooth solutions. This can be used to show existence of weak solutions as in \cite{VasseurYu2015} (see also \cite{GiLaVi2015} in the case of cold pressure). Note that we need the construction, and the a priori estimates only in the case where $r_0, r_1,\kappa$ are all positive. We actually proved that it is still valid in the case of some of these coefficients are 0 in (4) of Theorem \ref{theo_df}.
\vskip0.3cm
We consider a smooth solution $(\sro,\sro u)$ of (\ref{eq_system_r}). Multiplying the continuity equation by $\gamma \rho^{\gamma-1}/(\gamma-1)$ and the second  equation by $u$, integrating in $x$ the sum of these two quantities give
\begin{equation}\label{eq_appendix_energy}
\partial_t E(\sro,\sro u)+\De(\S)+r_0\|u\|^2_{L^2(\O)}+r_1 \int_\O \rho |u|^4\,dx=\int_\O \sro f u\,dx-\sqrt{\kappa}\int \M:\sro\nabla u\,dx.
\end{equation}
The equation on $v=u+\nu\nabla\ln\rho$ reads
$$
\partial_t (\rho v)+\Dv(\rho u\otimes v)+ \nabla\rho^\gamma -2\Dv (\nu\sro\A u+\sqrt{\kappa\rho}\Sk)=\sro f+\sqrt{\kappa}\Dv(\sro \M).
$$
Multiplying this equation by $v$, and adding the continuity equation multiplied by $\gamma \rho^{\gamma-1}/(\gamma-1)+\nu$ gives, after integrating in $x$:
\begin{equation}\label{eq_appendix_BD}
\begin{split}
&\partial_t \left\{ \mathcal{E}_{BD}(\rho,u)+r_0 \nu \int_\O(\rho-\ln\rho)\,dx\right\} +\mathcal{D}_{BD} (\rho,u)+r_0\|u\|^2_{L^2(\O)}+r_1 \int_\O \rho |u|^4\,dx\\
&\qquad =r_1\nu\int_\O 2\sro |u|^2u\cdot\nabla\sro\,dx+2\nu \int_\O f\nabla\sro\,dx-\sqrt{\kappa}\nu\int_\O \sro \nabla^2(\ln \rho):\M\,dx.
\end{split}
\end{equation}
We consider two cases.
\vskip0.3cm
\subsection{If $M$ is symmetric} Then, since 
$$
\M:\nabla u=\M:\D u,
$$
from \eqref{eq_appendix_energy}, we find
\begin{eqnarray*}
&&\qquad\qquad\partial_t E(\sro,\sro u)+\frac{1}{2}\De(\S)+r_0\|u\|^2_{L^2(\O)}+r_1 \int_\O \rho |u|^4\,dx\\
&&\leq \frac{\kappa}{\nu}\|\M(t)\|^2_{L^2(\O)}+\frac{\|f(t)\|_{L^2(\O)}}{\|f\|_{L^1(\R^+;L^2(\O))}} E(\sro,\sro u)+\|f(t)\|_{L^2(\O)}\|f\|_{L^1(\R^+;L^2(\O))}.
\end{eqnarray*}
The Gronwall's lemma gives that 
\begin{eqnarray*}
&&\sup_{t\in\R^+}E(\sro(t),\sro(t) u(t))+\frac{1}{2}\int_{\R^+}\De(\S)\,dt+r_0\|u\|^2_{L^2(\R^+\times\O)}+r_1 \int_0^\infty\int_\O \rho |u|^4\,dx\,dt\\
&&\qquad\qquad \leq 2\frac{\kappa}{\nu}\|\M\|^2_{L^2(\R^+\times\O)}+2\|f\|^2_{L^1(\R^+;L^2(\O))}+E(\sro_0,\sro_0 u_0).
\end{eqnarray*}
We have 
\begin{eqnarray*}
&&\qquad \qquad \left| r_1\nu\int_0^\infty\int_\O 2\sro |u|^2u\cdot\nabla\sro\,dx\,dt\right|\leq 3\nu r_1  \int_0^\infty\int_\O \sro|u|^2\sro|\nabla u|\,dx\,dt\\
&&\leq \nu\int_0^\infty\int_\O\rho|\nabla u|^2\,dx+{\dfrac{9}{4}}\nu r_1\int_0^\infty\int_\O r_1\rho |u|^4\,dx\,dt \\
&& \leq \nu \int_0^\infty\int_\O \rho|\A u|^2\,dx\,dt + (1+{\dfrac{9}{4}}\nu r_1) \left({2}\frac{\kappa}{\nu}\|\M\|^2_{L^2(\R^+\times\O)}+2\|f\|^2_{L^1(\R^+;L^2(\O))}+E(\sro_0,\sro_0 u_0)\right)\\
&& \leq \frac{1}{2}\int_0^\infty \mathcal{D}_{BD} (\rho,u)\,dx\,dt + (1+{\dfrac{9}{4}}\nu r_1) \left({2}\frac{\kappa}{\nu}\|\M\|^2_{L^2(\R^+\times\O)}+2\|f\|^2_{L^1(\R^+;L^2(\O))}+E(\sro_0,\sro_0 u_0)\right),
\end{eqnarray*}
and 
$$
\left| \nu \int_\O f\nabla\sro\,dx\right|\leq \sqrt{\nu} \|f\|_{L^2(\O)} \sqrt{\E(\sro(t),\sro(t) u(t))}
$$
and 
$$
\sqrt{\kappa}\nu\int_0^\infty\int_\O |\sro \nabla^2(\ln \rho):\M|\,dx\,dt \leq \frac{1}{4}\int_0^\infty \mathcal{D}_{BD} (\rho,u)\,dt +{\nu} \|\M\|_{L^2(\R^+\times\O)}^{{2}}.
$$
Doing as above gives that 
\begin{eqnarray*}
&&\sup_{t\in\R^+}\mathcal{E}_{BD}(\sro(t),\sro(t) u(t))+\frac{1}{4}\int_{\R^+}\mathcal{D}_{BD}\,dt+r_0\|u\|^2_{L^2(\R^+\times\O)}+r_1 \int_0^\infty\int_\O \rho |u|^4\,dx\,dt\\
&&\qquad\qquad \leq C_\nu (1+\kappa) (\|\M\|^2_{L^2(\R^+\times\O)}+\|f\|^2_{L^1(\R^+;L^2(\O))})+\mathcal{E}_{BD}(\sro_0,\sro_0 u_0) + E(\sro_0,\sro_0 u_0).
\end{eqnarray*}

\subsection{If $\nu r_1\leq 1/9$}  

We have 
\begin{eqnarray*}
&&\qquad \qquad \left| r_1\nu\int_0^\infty\int_\O 2\sro |u|^2u\cdot\nabla\sro\,dx\,dt\right|\leq 3\nu r_1  \int_0^\infty\int_\O \sro|u|^2\sro|\nabla u|\,dx\,dt\\
&&\leq \frac{\nu}{2}\int_0^\infty\int_\O\rho|\nabla u|^2\,dx+\dfrac{9}{2}\nu r_1\int_0^\infty\int_\O r_1\rho |u|^4\,dx\,dt .
\end{eqnarray*}

So considering $\mathcal{F}(t)=\mathcal{E}_{BD}+E$, we find directly:
$$
\partial_t\mathcal{F}(t)\leq C_\nu (1+\kappa)\|\M(t)\|^2_{L^2(\O)}+\|f(t)\|_{L^2(\O)} \sqrt{\mathcal{F}(t)}.
$$

We end the proof in the same way.

  \bibliography{biblioVV}

\begin{thebibliography}{10}

\bibitem{AntSpi2016}
P.~Antonelli and S.~Spirito.
\newblock Global existence of finite energy weak solutions of quantum
  navier-stokes equations.
\newblock {\em arXiv:1605.03510}, 2016.

\bibitem{BrDe2003}
D.~Bresch and B.~Desjardins.
\newblock {Existence of global weak solutions for 2D viscous shallow water
  equations and convergence to the quasi-geostrophic model.}
\newblock {\em Comm. Math. Phys.}, 238(1-3), 2003.

\bibitem{BrDe2007}
D.~Bresch and B.~Desjardins.
\newblock {On the existence of global weak solutions to the Navier-Stokes
  equations for viscous compressible and heat conducting fluids}.
\newblock {\em J. Math. Pures Appl.}, 87(9), 2007.

\bibitem{BrDeLi2003}
D.~Bresch, B.~Desjardins, and C.K. Lin.
\newblock On some compressible fluid models: {K}orteweg, lubrication and
  shallow water systems.
\newblock {\em Commun. Part. Diff. Eqs.}, 28:1009--1037, 2003.

\bibitem{BrMe2010}
S.~Brull and F.~M{\'e}hats.
\newblock Derivation of viscous correction terms for the isothermal quantum
  {E}uler model.
\newblock {\em Comm. Math. Phys}, 90:219--230, 2010.

\bibitem{FeZhou93}
D.~Ferry and J.-R. Zhou.
\newblock Form of the quantum potential for use in hydrodynamic equations for
  semiconductor device modeling.
\newblock {\em Phys. Rev. B}, 48:7944--7950, 1993.

\bibitem{GiLaVi2015}
M.~Gisclon and I.~Lacroix-Violet.
\newblock About the barotropic compressible quantum {N}avier-{S}tokes
  equations.
\newblock {\em Nonlinear Analysis Series A: Theory, Methods and Applications},
  128:106--121, 2015.

\bibitem{Grant73}
J.~Grant.
\newblock Pressure and stress tensor expressions in the fluid mechanical
  formulation of the {B}ose condensate equations.
\newblock {\em J. Phys. A: Math., Nucl. Gen.}, 6:L151--L153, 1973.

\bibitem{Jue10}
A.~J\"ungel.
\newblock Global weak solutions to compressible {N}avier-{S}tokes equations for
  quantum fluids.
\newblock {\em SIAM, J. Math. Anal.}, 42:1025--1045, 2010.

\bibitem{LiXin2015}
J.~Li and Zh. Xin.
\newblock Global existence of weak solutions to the barotropic compressible
  navier-stokes flows with degenerate viscosities.
\newblock {\em arXiv:1504.06826}, 2015.

\bibitem{Lions1996}
P.-L. Lions.
\newblock {\em Mathematical topics in fluid mechanics. {V}ol. 1}, volume~3 of
  {\em Oxford Lecture Series in Mathematics and its Applications}.
\newblock The Clarendon Press, Oxford University Press, New York, 1996.
\newblock Incompressible models, Oxford Science Publications.

\bibitem{LoMo93}
M.~Loffredo and L.~Morato.
\newblock On the creation of quantum vortex lines in rotating {H}e {II}.
\newblock {\em Il nouvo cimento}, 108B:205--215, 1993.

\bibitem{MelletVasseur2007}
A.~Mellet and A.~Vasseur.
\newblock On the barotropic compressible {N}avier-{S}tokes equations.
\newblock {\em Comm. Partial Differential Equations}, 32(1-3):431--452, 2007.

\bibitem{VasseurYu2015-2}
A.~Vasseur and Ch. Yu.
\newblock Existence of global weak solutions for 3d degenerate compressible
  navier-stokes equations.
\newblock {\em To appear in Inventiones Mathematicae}, 2015.

\bibitem{VasseurYu2015}
A.~Vasseur and Ch. Yu.
\newblock Global {W}eak {S}olutions to the {C}ompressible {Q}uantum
  {N}avier--{S}tokes {E}quations with {D}amping.
\newblock {\em SIAM J. Math. Anal.}, 48(2):1489--1511, 2016.

\bibitem{Wyatt05}
R.~Wyatt.
\newblock {\em Quantum Dynamics with Trajectories}.
\newblock Springer, New York, 2005.

\bibitem{zatorska2012}
E.~Zatorska.
\newblock On the flow of chemically reacting gaseous mixture.
\newblock {\em J. Differential Equations}, 253(12):3471--3500, 2012.

\end{thebibliography}
\bibliographystyle{plain}
\end{document}